\newcommand{\DOI}[1]{\href{http://dx.doi.org/#1}{DOI:#1}}
\newcommand{\arXiv}[1]{\href{http://arxiv.org/abs/#1}{arXiv:#1}}
\newcommand{\inv}{ l}
\newcommand{\mbfx}{\mathbf{x}}
\newtheorem{theorem}{Theorem}[section]
\newtheorem{proposition}[theorem]{Proposition}
\newtheorem{lemma}[theorem]{Lemma}
\theoremstyle{definition}
\newtheorem{definition}[theorem]{Definition}
\newtheorem{remark}[theorem]{Remark}
\begin{document}

\title{Probability distributions of multi--species $q$--TAZRP and ASEP as double cosets of parabolic subgroups}

\author{Jeffrey Kuan}

\date{\today}

\maketitle

\abstract{We write explicit contour integral formulas for probability distributions of the multi--species $q$--TAZRP and the multi--species ASEP starting with $q$--exchangeable initial conditions. The formulas are equal to the corresponding explicit contour integral formulas for the single--species $q$--TAZRP ([Korhonen--Lee 2014, Wang--Waugh 2016]) and ASEP [Tracy--Widom 2007], with a factor in front of the integral.

For the multi--species $q$--TAZRP, we use a decomposition theorem for elements of double cosets of parabolic subgroups in a Coxeter group. The set of distinguished double coset representatives with minimal length is viewed as a particle configuration. For the multi--species ASEP we use a more direct proof.  
}

\section{Introduction}
ASEP (asymmetric simple exclusion process, introduced in \cite{Sp}) and $q$--TAZRP (totally asymmetric zero range process, introduced in \cite{SW}) are examples of integrable models for which exact formulas for the transition probabilities can be written. When considering the models on the infinite line, these formulas are expressed as explicit $N$--fold contour integrals, where $N$ is the number of particles in the system, and were found in \cite{TW} (for ASEP) and \cite{KL} (for $q$--TAZRP; see also \cite{WW} for the inhomogeneous case) using Bethe Ansatz methods.

There are multi--species (also called multi--class) generalizations of ASEP and $q$--TAZRP, introduced in \cite{L} and \cite{T}, respectively. In these these multi--species models, there are $n$ species of particles, such that the projection onto the first $k$ species is Markov. In \cite{TW2}, the authors consider multi--species ASEP and prove explicit contour integral formulas for the location of the second class particle when the initial condition consists of a single second class particle located at $0$ and first class particles located at $\{1,2,\ldots\}$. In subsequent work \cite{TW3}, the same authors prove contour integral formulas for the transition probabilities for any initial condition and any number of species, assuming the number of particles is finite. However, the integrand is not explicitly written, except in a few cases, but is instead defined as a solution to certain consistency relations which are written in terms of the braid relations. 

In this paper, we consider multi--species ASEP and $q$--TAZRP with $q$--exchangeable initial conditions. These are initial conditions in which switching two nearest--neighbor particles of different species multiplies the probability by a factor of $q$. We find explicit contour integral formulas for the probability distributions, which are equal to the corresponding formulas for the single--species models, with a multiplicative factor in front of the integral. 

To prove the result for the multi--species $q$--TAZRP, we use a result from the theory of Coxeter groups, which uniquely decomposes elements of double cosets of parabolic subgroups. Roughly speaking, the left cosets correspond to allowing more than one particle to occupy a site, and the right cosets correspond to having more than one species of particles. The decomposition preserves the length function, and the dynamics can also be written in terms of the length function. For the multi--species ASEP, only one particle may occupy a site, so a more direct proof from Markov process generalities is used.

In section \ref{1}, we state and prove some lemmas involving Coxeter groups. Section \ref{2} shows the result for multi--species $q$--TAZRP, and section \ref{3} shows the result for multi--species ASEP.

\textbf{Acknowledgements.} The author was supported by NSF grant DMS--1502665 and the Minerva Foundation. The author would like to thank Alexei Borodin and Ivan Corwin for helpful discussions.

\section{Background and preliminary lemms}\label{1}

\subsection{Coxeter groups}
We recall some results about finite Coxeter groups; see e.g. \cite{BOOK}. A finite \textit{Coxeter group} is a group $W$ with a presentation
$$
W= \langle s \in S: s^2 = e \text{ for all }s\in S \text{ and } (s_is_j)^{m(s_i,s_j)}=e \text{ for all } s_i,s_j \in S \rangle
$$
where $m(s_i,s_j)$ is the order of $s_is_j$. 

If $S$ has $l$ elements, there is a representation $\tau$ of $W$ onto an $l$--dimensional vector space $V$.  Let $\alpha_1,\ldots,\alpha_l$ be a basis of $V$ and define a bilinear form on $V$ by 
$$
\langle \alpha_i,\alpha_j \rangle = -\cos \frac{\pi}{m_{ij}}.
$$
The map $\tau_{\sigma_i}$ is defined by 
$$
\tau_{\sigma_i}(v) = v - 2\langle \alpha_i,  v\rangle \alpha_i.
$$
This is the reflection across the hyperplane perpendicular to $\alpha_i$. The set $\Delta = \{\alpha_1, \ldots, \alpha_l\}$ is called the set of simple roots, and $\Phi=W(\Delta)$ is the set of all roots. Each root $\alpha \in \Phi$ has the form $\alpha = \sum_{i=1}^l \lambda_i \alpha_i$ where either all $\lambda_i \geq 0$ or all $\lambda_i \leq 0$. Define the set $\Phi^+$ of all positive roots to be the $\alpha\in \Phi$ for which all $\lambda_i \geq 0$. 

The length function $l(w)$ is the minimal length of an expression of $w$ as a product of generators $s_i \in S$.

\begin{proposition}\label{Three} 
(a) The only positive root made negative by $\tau_{s_i}$ is $\alpha_i$.

(b) For any $w$, the length $l(w)$ is the number of positive roots made negative by $w$.

(c) For any $w$, the $l(ws_i)$ is either $l(w) +1$ or $l(w)-1$. Similarly, $l(s_iw)$ is either $l(w)+1$ or $l(w)-1$.
\end{proposition}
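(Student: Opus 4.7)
The plan is to handle the three parts in the order (a), (c), (b). Part (a) is a direct computation: $s_i^2 = e$ gives $m(s_i,s_i) = 1$, so $\langle\alpha_i,\alpha_i\rangle = -\cos\pi = 1$ and $\tau_{s_i}(\alpha_i) = \alpha_i - 2\alpha_i = -\alpha_i$. For any other positive root $\alpha = \sum_j \lambda_j \alpha_j$, some $\lambda_k$ with $k \neq i$ is strictly positive. Since $\tau_{s_i}$ only alters the coefficient on $\alpha_i$, the image retains $\lambda_k > 0$ on $\alpha_k$, so by the dichotomy for roots it lies in $\Phi^+$.

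For part (c), each $\tau_{s_i}$ is a reflection, so $\det \tau_{s_i} = -1$ and therefore $\det\tau_w = (-1)^r$ for any expression $w = s_{j_1}\cdots s_{j_r}$. The parity of $r$ is thus a well-defined invariant of $w$ and equals the parity of $l(w)$, so $l(ws_i) \not\equiv l(w) \pmod{2}$ and hence $l(ws_i) \neq l(w)$. Combined with the trivial bounds $l(ws_i) \leq l(w)+1$ and $l(w) \leq l(ws_i)+1$, this forces $l(ws_i) = l(w) \pm 1$; the argument for $l(s_iw)$ is identical.

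For part (b), set $N(w) := |\{\alpha \in \Phi^+ : w(\alpha) \in -\Phi^+\}|$. Part (a) shows $\tau_{s_i}$ permutes $\Phi^+ \setminus \{\alpha_i\}$ and swaps $\alpha_i$ with $-\alpha_i$; a short bookkeeping then yields
\[
N(ws_i) = N(w) + 1 \text{ if } w(\alpha_i) \in \Phi^+, \quad N(ws_i) = N(w) - 1 \text{ otherwise.}
\]
Iterating along any expression of length $r$ gives $N(w) \leq r$, hence $N(w) \leq l(w)$. For the reverse inequality I fix a reduced expression $w = s_{i_1}\cdots s_{i_k}$, define $\beta_j := s_{i_k}s_{i_{k-1}}\cdots s_{i_{j+1}}(\alpha_{i_j})$ for $j = 1,\ldots,k$, and compute directly $w(\beta_j) = -s_{i_1}\cdots s_{i_{j-1}}(\alpha_{i_j})$. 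The goal is to verify that the $\beta_j$ are $k$ distinct positive roots, each sent by $w$ into $-\Phi^+$, giving $N(w) \geq k = l(w)$.

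The main obstacle is the key lemma $l(us_t) = l(u) + 1 \iff u(\alpha_t) \in \Phi^+$, which I would apply both to the reduced partial products $s_{i_k}\cdots s_{i_{j+1}}$ (the reverse of a suffix, still reduced since $l(u^{-1}) = l(u)$) and to the prefixes $s_{i_1}\cdots s_{i_{j-1}}$. Granting the lemma, positivity of $\beta_j$ and negativity of $w(\beta_j)$ are both immediate from the formulas above, while distinctness of the $\beta_j$ reduces to reducedness via the conjugation identity $s_{u\alpha} = u s_\alpha u^{-1}$, itself a corollary of (a). Proving the key lemma from only (a) and (c) is the delicate step: neither direction follows immediately from the recursion for $N$, the bound $N \leq l$, and the parity statement in (c), and the standard route is to first establish the Strong Exchange Condition by a geometric argument in $V$ that uses (a) essentially.
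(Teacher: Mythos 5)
The paper does not actually prove this proposition; it cites Propositions 2.2.6--2.2.8 of Carter's book, so your proposal is necessarily a different route, namely a from-scratch proof. Your parts (a) and (c) are complete and correct: the computation $\tau_{s_i}(\alpha_i)=-\alpha_i$ together with the observation that $\tau_{s_i}$ only disturbs the $\alpha_i$-coordinate settles (a) (modulo the one-line remark that the only positive multiple of $\alpha_i$ in $\Phi$ is $\alpha_i$ itself, which follows because all roots are unit vectors for the form), and the determinant/parity argument for (c) is the standard one and is airtight.

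Part (b), however, is not a proof as written, and you say so yourself. The entire lower bound $N(w)\ge l(w)$ --- positivity of the $\beta_j$, negativity of $w(\beta_j)$, and (via the conjugation identity) their distinctness --- rests on the key lemma $l(us_t)=l(u)+1 \iff \tau_u(\alpha_t)\in\Phi^+$, and you correctly observe that this does not follow formally from (a), (c), the recursion for $N$, or the inequality $N\le l$. (The recursion $N(ws_i)=N(w)\pm 1$, the bound $N\le l$, and the parity statement are all consistent with $N(w)<l(w)$ for some $w$; the lemma is exactly what rules this out, e.g.\ via the induction $N(w)=N(ws_i)+1$ when $s_i$ is the last letter of a reduced word.) That lemma is the genuinely hard step of the whole theory --- it is essentially what Carter's 2.2.6--2.2.7 (or Humphreys \S 1.6--1.7) prove, typically by induction on $l(u)$ using the rank-two parabolic subgroup generated by the relevant pair of generators, or via the Strong Exchange Condition. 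Since you state it, defer it, and do not supply that induction, the proposal establishes (a), (c), and the inequality $N(w)\le l(w)$, but not the equality asserted in (b). To close the gap you would need to prove at least the direction $l(us_t)=l(u)+1 \Rightarrow \tau_u(\alpha_t)\in\Phi^+$; the converse then follows from part (a) by replacing $u$ with $us_t$.
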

\begin{proof}
Parts (a),(b),(c) follow from Propositions 2.2.6, 2.2.7 and 2.2.8 of \cite{BOOK}, respectively.
\end{proof}

Given a subset $J\subseteq S$, let $W_J$ be the subgroup of $S$ generated by $J$. Subgroups of this type are called parabolic subgroups, and are themselves Coxeter groups. Let $\Delta_J \subseteq \Delta$ be the set of simple roots $\alpha_j$ such that $s_j \in J$. 

\begin{proposition}\label{2.3.3}
Fix a parabolic subgroup $W_J$ in $W$. Then:

(a) Every left coset of $W_J$ has a unique representative with the fewest number of inversions in that coset.

(b) Let $D_J$ denote the set of distinguished coset representative from part (a). Every $w \in W$ has a unique decomposition $w = w^0 \overline{w}$ where $\overline{w } \in W_J$ and $\sigma^0 \in D_J$, which satisfies $\inv(w) = \inv(w^0) + \inv(\overline{w})$. 

(c) The set $D_J$ can be described by $\{ w \in W: \tau_w(\Delta_J) \subseteq \Phi^+\}$.

(d) Every right coset of $W_J$ has a unique representative with the fewest number of inversions in that coset, and $D_J^{-1}$ is the set of these coset representatives. Furthermore, there is a unique decomposition $w = \overline{w} w_0$ where $\overline{w} \in W_J$ and $\sigma_0 \in D_J^{-1}$ which satisfies $\inv(w) = \inv(\overline{w}) + \inv(w_0)$.
\end{proposition}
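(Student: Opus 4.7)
The plan is to first prove a geometric characterization of minimal coset representatives via the action on simple roots, then use it to derive length-additivity of the decomposition, then deduce (a), (b), (c) together, and finally handle (d) by inversion.

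I would begin with the key length--root identity: for every $w \in W$ and every simple reflection $s_i$, one has $\inv(ws_i) = \inv(w) + 1$ if and only if $\tau_w(\alpha_i) \in \Phi^+$. The argument uses Proposition \ref{Three}: by part (a), $\tau_{s_i}$ permutes $\Phi^+ \setminus \{\alpha_i\}$, so the positive roots made negative by $ws_i$ within $\Phi^+ \setminus \{\alpha_i\}$ are in bijection with those made negative by $w$ within $\Phi^+ \setminus \{\alpha_i\}$. The root $\alpha_i$ itself is an inversion of exactly one of $w$ and $ws_i$, the choice being governed by the sign of $\tau_w(\alpha_i)$. Counting inversions via part (b) gives the identity. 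An immediate corollary is that $\tau_w(\Delta_J) \subseteq \Phi^+$ if and only if $\inv(ws_j) > \inv(w)$ for every $s_j \in J$; so the set in (c) consists precisely of those $w$ for which right-multiplication by any generator of $W_J$ strictly increases length.

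Next I would establish length-additivity: for $w$ satisfying $\tau_w(\Delta_J) \subseteq \Phi^+$ and any $u \in W_J$, $\inv(wu) = \inv(w) + \inv(u)$. Take a reduced expression $u = s_{j_1} \cdots s_{j_k}$ in $W_J$ and induct on $k$; at the $i$-th step it suffices to show $\tau_{w s_{j_1} \cdots s_{j_{i-1}}}(\alpha_{j_i}) \in \Phi^+$. By the intermediate identity applied inside $W_J$, the vector $\tau_{s_{j_1} \cdots s_{j_{i-1}}}(\alpha_{j_i})$ is a nonnegative combination of $\Delta_J$, and $\tau_w$ then carries nonnegative combinations of $\Delta_J$ to nonnegative combinations of elements of $\Phi^+$, which lie in $\Phi^+$. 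The main obstacle here is a parabolic compatibility point: one must verify that the Coxeter system $(W_J, J)$ has root system $W_J(\Delta_J) \subseteq \Phi$ with positive roots exactly the nonnegative combinations of $\Delta_J$, and that its intrinsic length agrees with the restriction of $\inv$, so that the intermediate identity may be invoked internally in $W_J$.

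With length-additivity secured, parts (a), (b) and (c) follow together. Existence of a minimal-length element $w^0 \in wW_J$ is automatic; the intermediate identity forces any such minimum to satisfy $\tau_{w^0}(\Delta_J) \subseteq \Phi^+$, so $w^0$ lies in the set of (c). For uniqueness, any other representative has the form $w^0 u$ with $u \in W_J \setminus \{e\}$, and length-additivity yields $\inv(w^0 u) = \inv(w^0) + \inv(u) > \inv(w^0)$. This identifies $D_J$ with the set of minimal left coset representatives and proves (c). For (b), take $w^0$ to be the distinguished representative of $wW_J$ and set $\overline{w} = (w^0)^{-1} w \in W_J$; the length identity is exactly length-additivity, and uniqueness of the decomposition follows from uniqueness of $w^0$. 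Finally, (d) reduces to (a)--(c) via inversion: right cosets correspond to left cosets by $W_J w \leftrightarrow w^{-1} W_J$, and $\inv$ is inversion-invariant because $\alpha \mapsto -\tau_w(\alpha)$ gives a bijection between the positive roots made negative by $w$ and those made negative by $w^{-1}$. Hence the minimal right coset representatives are the inverses of elements of $D_J$, and the factorization $w = \overline{w} w_0$ is obtained by applying (b) to $w^{-1}$ and inverting.
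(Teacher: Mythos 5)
Your proof is correct, but it takes a different route from the paper only in the sense that the paper gives no argument at all: it simply cites Proposition 2.3.3 of Carter's book \cite{BOOK}, whereas you reconstruct the standard self-contained root-system proof. Your chain of reasoning is sound: the exchange identity ($\inv(ws_i)=\inv(w)+1$ iff $\tau_w(\alpha_i)\in\Phi^+$) follows correctly from Proposition \ref{Three}(a),(b) by the counting argument you describe; length-additivity then gives (a), (b), (c) simultaneously; and (d) follows by inversion once one knows $\inv(w)=\inv(w^{-1})$, which your bijection $\alpha\mapsto-\tau_w(\alpha)$ establishes. The one point you flag but do not discharge --- that $(W_J,J)$ has root system $\Phi_J=W_J(\Delta_J)$ with positive roots the nonnegative combinations of $\Delta_J$, and that the intrinsic length of $W_J$ agrees with the restriction of $\inv$ --- is genuinely needed for your induction inside $W_J$, but it is true and provable in the same spirit: each $s_j$ with $j\in J$ fixes the coefficients of simple roots outside $\Delta_J$ and negates only $\alpha_j$, so $W_J$ permutes $\Phi^+\setminus\Phi_J$, hence every inversion of $u\in W_J$ lies in $\Phi_J^+$ and the two length functions coincide. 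With that lemma inserted, your argument is complete. The trade-off is the usual one: the paper's citation is economical and lets it move on to the double-coset statements it actually needs, while your version makes the length-additivity mechanism explicit, which is the same mechanism the paper later reuses (e.g.\ in Proposition \ref{Cox}(d) and Lemma \ref{Moo}), so having it spelled out is not wasted effort.
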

\begin{proof}
This is Proposition 2.3.3 of \cite{BOOK}.
\end{proof}

We will also need some results about \textit{double} cosets. Let $W_K$ be another parabolic subgroup of $W$, and define $D_{J,K} = D_{J}^{-1} \cap D_{K}$. Also see Corollary 2.8 of \cite{BKPST}, which references Proposition 8.3 of \cite{GS83} and Theorem (1.2) of \cite{Curtis85}, for similar statements.

\begin{proposition}\label{Cox}
(a) Each double coset $W_Jw W_K$ contains a unique element of $D_{J,K}$, and every $w \in D_{J,K}$ is the unique element of minimal inversions in its double coset $W_J w W_K$. 

(b) Let $w \in D_{J,K}$ and let $L$ be the parabolic subgroup defined by $\Delta_L = \Delta_{J} \cap w(\Delta_K)$. Every element of the double coset $W_J w W_K$ is uniquely expressible in the form $aw b$, where $a \in W_J \cap D_L$ and $b \in W_K$. Furthermore, $\inv(aw b) = \inv(a) + \inv(w) + \inv(b)$.

(c) The elements $a$ and $b$ can be constructed as follows. For $xw y$, let $x=ax'$ be the decomposition arising from $W_J = (W_J\cap D_L)L$, and let $b=w^{-1}x' w y$.

(d) If $w_0 \in D_J^{-1}$, then in the decomposition $w_0 = aw b$ in (b), the element $a$ is the identity. 

(e) If $w^0 \in D_K$, then in the decomposition $w^0 = aw b$ in (b), the element $b$ is the identity.
\end{proposition}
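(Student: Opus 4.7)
The plan is to prove existence in (a) first, then establish (b) and (c) together, deduce (d) and (e), and finally conclude the uniqueness in (a). For existence in (a), I would take any element $w$ of minimal length in the double coset $W_J w_0 W_K$; if $\ell(s w) < \ell(w)$ for some $s \in J$, then $s w$ would be a shorter element of the same double coset, a contradiction. So $\ell(s w) > \ell(w)$ for every $s \in J$, which via the root characterisation of Proposition~\ref{2.3.3}(c) places $w$ in $D_J^{-1}$, and the symmetric argument on the right gives $w \in D_K$.

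To construct the decomposition in (b), I would apply Proposition~\ref{2.3.3}(b) inside $W_J$ with its further parabolic $L$ to write any $x \in W_J$ as $x = a x'$ with $a \in W_J \cap D_L$ and $x' \in L$; this is essentially the recipe promised in (c). Since $\Delta_L \subseteq w(\Delta_K)$, conjugation yields $w^{-1} L w \subseteq W_K$, and hence $x w y = a w (w^{-1} x' w) y = a w b$ with $b \in W_K$. Uniqueness of $a$ and $b$ hinges on the classical identity
\[
W_J \cap w W_K w^{-1} \;=\; L,
\]
which I would prove by showing that a reflection lies in the intersection iff its root lies in $\Phi_J \cap w(\Phi_K)$, and then using the minimality condition $w \in D_{J,K}$ to identify this root subsystem with $\Phi_L$. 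Given this, $a w b = a' w b'$ forces $(a')^{-1} a \in L$, and minimality of $a, a'$ as $D_L$-coset representatives in $W_J$ forces $a = a'$ and $b = b'$.

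For the length additivity $\ell(a w b) = \ell(a) + \ell(w) + \ell(b)$, I would first note that $\ell(w b) = \ell(w) + \ell(b)$, because $w \in D_K$ means $w(\Phi_K^+) \subseteq \Phi^+$ so no inversion of $b$ is undone by $w$. Then I would verify that the inversion set of $a$ (contained in $\Phi_J^+ \setminus \Phi_L^+$ since $a \in W_J \cap D_L$) is disjoint from the inversion set of $(wb)^{-1}$ intersected with $\Phi_J^+$ (contained in $w(\Phi_K^+) \cap \Phi_J^+ = \Phi_L^+$ by the classical identity above), so the reduced expressions concatenate.

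Parts (d) and (e) then follow at once: if $w_0 \in D_J^{-1}$ decomposes as $a w b$, then $a^{-1} w_0 = w b$ lies in $W_J w_0$ with length $\ell(w_0) - \ell(a)$, so minimality of $w_0$ in its right coset forces $a = e$; the argument for (e) is symmetric. Uniqueness in (a) is then immediate from (d) and (e): if $w, w' \in D_{J,K}$ lie in the same double coset, applying the $w$-decomposition of (b) to $w'$ and using that $w' \in D_J^{-1} \cap D_K$ yields $a = b = e$, so $w' = w$. I expect the main obstacle to be the classical identity $W_J \cap w W_K w^{-1} = L$, as everything else is bookkeeping via Propositions~\ref{Three} and \ref{2.3.3}.
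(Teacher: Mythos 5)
Your proposal is sound, but it is worth separating the two halves of the statement. For parts (d) and (e) — the only parts the paper actually proves rather than cites — your argument is essentially identical to the paper's: decompose $w_0=awb$, observe $W_Jw_0=W_Jwb$, use minimality of $w_0$ in its right coset together with $\inv(wb)=\inv(w)+\inv(b)$ to force $\inv(a)=0$. For parts (a)--(c) the paper simply cites Propositions 2.7.3 and 2.7.5 of \cite{BOOK}, whereas you reconstruct the standard textbook proof: existence of a $D_{J,K}$-element via minimal length and the characterization of distinguished representatives, the decomposition $xwy=aw(w^{-1}x'w)y$ using $w^{-1}Lw\subseteq W_K$, uniqueness via $W_J\cap wW_Kw^{-1}=W_L$, and length additivity via disjointness of inversion sets. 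That route is correct, and deducing the uniqueness in (a) from (d) and (e) at the end is a clean way to organize it. The one place where your sketch is genuinely incomplete is the identity $W_J\cap wW_Kw^{-1}=W_L$ (Kilmoyer's theorem): showing that this intersection is a reflection subgroup whose root system is exactly $\Phi_J\cap w(\Phi_K)$, and that $\Delta_J\cap w(\Delta_K)$ is a simple system for it when $w\in D_{J,K}$, is the real content of the cited results and would need a full argument (or its own citation) rather than the one-line description you give. Everything else in your write-up is, as you say, bookkeeping via Propositions~\ref{Three} and~\ref{2.3.3}.
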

\begin{proof}
Part (a) is Proposition 2.7.3 of \cite{BOOK}.

Part (b) is Proposition 2.7.5 of \cite{BOOK}.

Part (c) follows from the first paragraph of the proof of Proposition 2.7.5 in \cite{BOOK}.

In part (d), there is a unique $w \in D_{J,K}$ such that $w_0 \in W_Jw W_K$, so take $w_0=aw b$ as in (c). Since $w_0 \in D_J^{-1}$, it is the unique element of its right coset $W_Jw_0$ with the fewest number of inversions. But the right coset $W_Jw_0$ equals $W_Jaw b = W_Jw b$, which implies that $\inv(w_0) \leq \inv(w b)$ . Since $w \in D_{J,K} \subseteq D_K$, then by the previous proposition $\inv(w b) = \inv(w) + \inv(b)$. Therefore $\inv(w_0) = \inv(a) + \inv(w) + \inv(b) \leq \inv(w) + \inv(b)$, so $\inv(a)=0$, implying that $a$ is the identity.

The proof of (e) is identical to the proof of (d).
\end{proof}

Note that the previous proposition is not true if $a$ is only required to be an element of $W_J$.

Part (a) implies that $\left| D_{J,K} \right| = \left| W_J\backslash W/ W_K \right|$. By the Cauchy--Frobenius lemma,

$$
\left| W_J \backslash W / W_K \right| = \frac{1}{ \vert W_J \vert \vert W_K\vert} \sum_{(x,y) \in W_J \times W_K} \left| W^{(x,y)}\right|
$$
where $W^{(x,y)}$ is the set of $w \in W$ such that $x w y^{-1} = w$. Because $W^{(e,e)}=W$, we must have $\vert D_{J,K}\vert \geq \vert W\vert / ( \vert W_J \vert \vert W_K\vert)$. In particular, the map from (d)
\begin{align*}
D_J^{-1} & \rightarrow D_{J,K} \times W_K \\
w_0 & \mapsto (w,b)
\end{align*}
is an injection but in general not a surjection. However, we do have:
\begin{lemma}\label{Moo}
The decomposition from (e) defines a map
\begin{align*}
D_K & \rightarrow \displaystyle \coprod_{w \in D_{J,K}} (W_J \cap D_{L})  \times \{w\}\\
w^0 & \mapsto (a,w),
\end{align*}
where $L$ depends on $w$, which is a bijection satisfying $l(w^0)=l(a) + l(w)$.
\end{lemma}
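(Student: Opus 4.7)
My plan is to read the statement as a clean corollary of Proposition \ref{Cox}, and then to upgrade the injection it produces to a bijection by a simple cardinality count. First I would construct the map: given $w^0 \in D_K$, Proposition \ref{Cox}(a) selects the unique $w \in D_{J,K}$ with $w^0 \in W_J w W_K$, and Proposition \ref{Cox}(b) then produces a unique factorization $w^0 = a w b$ with $a \in W_J \cap D_L$ (for $\Delta_L = \Delta_J \cap w(\Delta_K)$) and $b \in W_K$, together with the length identity $l(w^0) = l(a) + l(w) + l(b)$. Proposition \ref{Cox}(e) then forces $b = e$. This simultaneously shows that the assignment $w^0 \mapsto (a, w)$ is well-defined and gives $l(w^0) = l(a) + l(w)$, and the assignment is clearly injective since the pair $(a,w)$ recovers $w^0 = aw$.

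For surjectivity I would count. By Proposition \ref{Cox}(b), multiplication $(a, b) \mapsto a w b$ is a bijection $(W_J \cap D_L) \times W_K \to W_J w W_K$, so $|W_J w W_K| = |W_J \cap D_L| \cdot |W_K|$. Summing over $w \in D_{J,K}$, which by Proposition \ref{Cox}(a) index the partition of $W$ into double cosets, gives
$$
\sum_{w \in D_{J,K}} |W_J \cap D_L| \;=\; \frac{|W|}{|W_K|}.
$$
By Proposition \ref{2.3.3}(a) applied with $K$ in place of $J$, one also has $|D_K| = |W|/|W_K|$. Hence the domain and codomain are finite of equal size, and the injection above is automatically a bijection.

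The main obstacle is really only conceptual: one must notice that Proposition \ref{Cox}(e) is precisely the assertion that for $w^0 \in D_K$ the $W_K$-factor in its double-coset factorization is trivial, so the target of the map actually is $(W_J \cap D_L) \times \{w\}$ rather than $(W_J \cap D_L) \times W_K$. With that observation in place, the combinatorics takes over; a direct verification that every $aw$ with $a \in W_J \cap D_L$ and $w \in D_{J,K}$ lies in $D_K$ would instead require a root-theoretic case analysis via the characterization $D_K = \{w : \tau_w(\Delta_K) \subseteq \Phi^+\}$, separating the cases $w(\alpha_k) \in \Phi_J$ and $w(\alpha_k) \notin \Phi_J$, which the cardinality argument sidesteps.
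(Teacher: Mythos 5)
Your proof is correct, but your surjectivity argument takes a genuinely different route from the paper's. You both construct the map identically (Proposition \ref{Cox}(a), (b), (e) give the well-defined assignment $w^0 \mapsto (a,w)$ with $b=e$ and the length identity) and both get injectivity from the uniqueness in \ref{Cox}(b). For surjectivity, however, the paper constructs an explicit inverse: it shows directly that $xw \in D_K$ for any $x \in W_J \cap D_L$ and $w \in D_{J,K}$, by invoking the algorithm of Proposition \ref{Cox}(c) — the decomposition $x = ax'$ in $W_J = (W_J\cap D_L)W_L$ forces $a=x$ and $x'=e$, hence $b = w^{-1}x'wy = e$, so the canonical factorization of $xw$ is $x\cdot w\cdot e$. (So your closing remark that a direct verification would require a root-theoretic case analysis via $\tau_w(\Delta_K)\subseteq \Phi^+$ is not accurate; the paper does it purely with the coset machinery of \ref{Cox}(c).) Your counting argument — $|W_Jw W_K| = |W_J\cap D_L|\cdot|W_K|$ from \ref{Cox}(b), summed over $D_{J,K}$ to match $|D_K| = |W|/|W_K|$ — is a clean alternative that is fully rigorous given that the paper works with finite Coxeter groups, and it neatly sidesteps the one slightly delicate point in the paper's version, namely that having $b=e$ in the canonical factorization actually implies membership in $D_K$ (the converse of \ref{Cox}(e), which the paper leaves implicit). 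The trade-off is that the paper's argument is constructive and would survive in settings where the cardinality count is unavailable, while yours is shorter and more robust against that implicit step.
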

\begin{proof}
It is injective due to the uniqueness property in (b). To show it is surjective, it suffices to construct an inverse. In other words, we want to show that $xw \in D_K$ for any $w \in D_{J,K}$ and $x \in W_J\cap D_L$. Indeed, by part (c), one takes the unique decomposition $x = ax'$ from $W_J = (W_J \cap D_L)W_L$, but we must have $a=x$ because $x \in W_J\cap D_L$. Therefore $xw = aw = aw b$ for a unique $b\in W_K$, but clearly this means that $b=e$. 
\end{proof}

The next lemma will be useful at a later conjecture, but since it is true for Coxeter groups in general we state it here.
\begin{lemma}\label{PREV}
Suppose that $\sigma \in D_{J,K}$, and $l(s_i\sigma) = l(\sigma s_j) = l(\sigma)-1$. Then $s_i\sigma$ and $\sigma s_j$ are both in $D_{J,K}$.
\end{lemma}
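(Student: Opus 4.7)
My plan is to verify, via the length-based characterizations implicit in Proposition~\ref{2.3.3}, that both $s_i\sigma$ and $\sigma s_j$ satisfy $w\in D_K$ (equivalently $l(ws_k)>l(w)$ for every $s_k\in K$) and $w\in D_J^{-1}$ (equivalently $l(s_{j'}w)>l(w)$ for every $s_{j'}\in J$), so that they lie in $D_J^{-1}\cap D_K=D_{J,K}$. A first observation is that $\sigma\in D_J^{-1}$ combined with $l(s_i\sigma)=l(\sigma)-1$ forces $s_i\notin J$, and symmetrically $s_j\notin K$.

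The easy half is $s_i\sigma\in D_K$ and $\sigma s_j\in D_J^{-1}$. For $s_k\in K$ I rewrite $l((s_i\sigma)s_k)=l(s_i(\sigma s_k))$, invoke $l(\sigma s_k)=l(\sigma)+1$ from $\sigma\in D_K$, and apply Proposition~\ref{Three}(c) to the outer $s_i$, so that $l((s_i\sigma)s_k)\in\{l(\sigma),\,l(\sigma)+2\}$; both values exceed $l(s_i\sigma)=l(\sigma)-1$, giving $s_i\sigma\in D_K$. The mirror computation $l(s_{j'}(\sigma s_j))=l((s_{j'}\sigma)s_j)\in\{l(\sigma),\,l(\sigma)+2\}$, using $l(s_{j'}\sigma)=l(\sigma)+1$ from $\sigma\in D_J^{-1}$, yields $\sigma s_j\in D_J^{-1}$.

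The harder half is $s_i\sigma\in D_J^{-1}$ and $\sigma s_j\in D_K$, where the above rewrite is unavailable because the parabolic generator and the length-decreasing generator now sit on the same side of $\sigma$. For this I pass to the root picture of Proposition~\ref{Three}: the hypotheses translate into $\sigma^{-1}(\alpha_i),\sigma(\alpha_j)\in -\Phi^+$, while $\sigma\in D_{J,K}$ gives $\sigma^{-1}(\alpha_{j'})\in\Phi^+$ for $s_{j'}\in J$ and $\sigma(\alpha_k)\in\Phi^+$ for $s_k\in K$. The inclusion $s_i\sigma\in D_J^{-1}$ becomes $\sigma^{-1}s_i(\alpha_{j'})=\sigma^{-1}(\alpha_{j'})-2\langle\alpha_i,\alpha_{j'}\rangle\sigma^{-1}(\alpha_i)\in\Phi^+$ for every $s_{j'}\in J$. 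This is the main obstacle: the right-hand side is a positive root plus a nonnegative multiple of a negative root, and pinning its sign seems to require the joint identity $s_i\sigma=\sigma s_j$ (equivalently $\sigma^{-1}(\alpha_i)=-\alpha_j$). Once that identity is in hand, the two halves of the harder direction collapse into a single instance of the easy half already established, completing the proof.
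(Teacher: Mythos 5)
Your ``easy half'' is correct and complete: the observations that $s_i\notin J$ and $s_j\notin K$, and the computation $l\bigl((s_i\sigma)s_k\bigr)=l\bigl(s_i(\sigma s_k)\bigr)\in\{l(\sigma),l(\sigma)+2\}>l(s_i\sigma)$ for $s_k\in K$ together with its mirror image, rigorously give $s_i\sigma\in D_K$ and $\sigma s_j\in D_J^{-1}$. The ``harder half'' is a genuine gap, and you are right to flag it: the identity $s_i\sigma=\sigma s_j$ that you invoke to close it does \emph{not} follow from the hypotheses, and without it the claim fails. Concretely, take $W=S(3)$, $J=\emptyset$, $K=\{s_1\}$, so that $D_{J,K}=D_K=\{e,\,s_2,\,s_1s_2\}$, and let $\sigma=s_1s_2$. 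Then $l(s_1\sigma)=l(s_2)=1$ and $l(\sigma s_2)=l(s_1)=1$, so the hypotheses hold with $s_i=s_1$ and $s_j=s_2$; but $\sigma s_j=s_1\notin D_K$ since $l(s_1\cdot s_1)<l(s_1)$, and indeed $s_i\sigma=s_2\neq s_1=\sigma s_j$. So the lemma as stated is false, and the failure occurs exactly where your root computation says the sign cannot be pinned down: $\sigma s_j(\alpha_k)=\sigma(\alpha_k)-2\langle\alpha_j,\alpha_k\rangle\sigma(\alpha_j)$ is a positive root plus a nonnegative multiple of a negative root, and it can come out negative.

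For comparison, the paper's proof is a short contradiction argument: if $\sigma s_j\notin D_{J,K}$ there exist $x\in W_J$ and $y\in W_K$ with $l(x\sigma s_jy)<l(\sigma s_j)$, and it asserts $l(x\sigma y)\le l(x\sigma s_jy)+1$ to contradict the minimality of $\sigma$ in its double coset. That inequality is justified only when $y=e$ (respectively $x=e$ for the $s_i\sigma$ statement), because $x\sigma y$ and $x\sigma s_jy$ differ by right multiplication by the reflection $y^{-1}s_jy$, which is generally not simple; in the counterexample above, with $x=e$ and $y=s_1$, one has $l(x\sigma y)=l(s_1s_2s_1)=3$ while $l(x\sigma s_jy)+1=1$. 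So the paper's argument really establishes only the same half that you proved, and your diagnosis of where the difficulty lies is exactly right. To make the statement provable one must either add a hypothesis forcing $s_i\sigma=\sigma s_j$ (in which case, as you note, the easy half finishes the job) or weaken the conclusion to $s_i\sigma\in D_K$ and $\sigma s_j\in D_J^{-1}$.
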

\begin{proof}
Suppose that $\sigma s_j \notin D_{J,K}$. Then there exist $x\in W_J$ and $y\in W_K$ such that $l(x\sigma s_j y) < l(\sigma s_j)$. This implies that $l(x\sigma y) \leq l(x\sigma s_j y) + 1 \leq l(\sigma s_j) < l(\sigma)$, which contradicts the assumption that $\sigma\in D_{J,K}$. The proof for $s_i\sigma$ is identical.
\end{proof}

When the Coxeter group $W$ is the symmetric group $S(N)$ and the generators $S=\{s_1,\ldots,s_{N-1}\}$ are the transpositions $s_i = (i \ \ i+1)$, the parabolic subgroups are called Young subgroups. We will write Young subgroups as $H=S(m_1) \times S(m_2) \times \cdots \times S(m_r)$, where $S(m_1)$ acts on $\{1,\ldots,m_1\}$, $S(m_2)$ acts on $\{m_1+1,\ldots,m_1+m_2\}$, and so on. The positive integers $m_1,\ldots,m_r$ will be assumed to sum to $N$, so terms $S(1)$ are not excluded. Given a sequence of integers $\mathbf{m}=(m_1,\ldots,m_r)$, let $S(\mathbf{m})$ denote the Young subgroup $S(m_1) \times S(m_2) \times \cdots \times S(m_r)$.

 By a slight abuse of notation, when $W_J=H'$ and $W_K=H$ the sets $D_{H',H},D_J,\Delta_J$ will be denoted $D_{H',H},D_{H'},\Delta_{H'}$. The length function $l(\sigma)$ on $S(N)$ is the number of inversions of a permutation, that is, the number of pairs $(i,j)$ such that $i<j$ and $\sigma(i) > \sigma(j)$.

{\color{black}
\textbf{Example 1}. Let $H=S(1) \times S(2) \times S(2) \times S(3)$ and $H'=S(1)\times S(2) \times S(2) \times S(2) \times S(1)$. The set of simple roots are $\Delta_H=\{s_2,s_4,s_6,s_7\}$ and $\Delta_{H'}=\{s_2,s_4,s_6\}$. Set $\sigma = s_5s_4s_3s_1s_6s_5$.  Then
\begin{align*}
\tau_{s_5s_4s_3s_1s_6s_5} s_2 &= s_1 + s_2 + s_3 + s_4 + s_5\\
\tau_{s_5s_4s_3s_1s_6s_5} s_4 &= s_3+s_4+s_5+s_6\\
\tau_{s_5s_4s_3s_1s_6s_5} s_6 &= s_4\\
\tau_{s_5s_4s_3s_1s_6s_5} s_7 &= s_5+s_6+s_7\\
(\tau_{s_5s_4s_3s_1s_6s_5})^{-1} s_4 &= s_6 \\
(\tau_{s_5s_4s_3s_1s_6s_5})^{-1} s_2 &= s_1+s_2+s_3 \\
(\tau_{s_5s_4s_3s_1s_6s_5})^{-1} s_6 &= s_3 + s_4 
\end{align*}
showing that $\tau_{\sigma}(\Delta_H) \subseteq \Phi^+$ and $\tau_{\sigma}^{-1}(\Delta_{H'}) \subseteq \Phi^+$, and thus $\sigma \in D_{H',H}$.  The subgroup $L$ is defined by $\Delta_L=\Delta_{H'} \cap \tau_{\sigma}(\Delta_H)$, and from the above calculations we see that $L$ is the Young subgroup generated by the single element $s_4$, and therefore by Proposition \ref{2.3.3}(b), $H' \cap D_L = \{s_2,s_6\}$. In the next section, we will see how $\sigma$ can be constructed by using the state space of multi--species $q$--TAZRP.
}

\subsection{$q$--notation}

Fix $0<q<1$. For any $k\geq 0$, let 
$$
[k]_q = \frac{1-q^k}{1-q} = 1 + q + q^2 + \ldots + q^{k-1}
$$ 
be the $q$--deformed integer. Let $[k]_q^! = [1]_q \cdots [k]_q$ be the $q$--deformed factorial. The $q$--Pochhammer symbol is
$$
(\alpha;q)_k = (1-\alpha)(1-q\alpha) \cdots ( 1-q^{k-1}\alpha), \quad 0 \leq k \leq \infty.
$$
Observe that $(1-q)^k[k]_q^! = (q;q)_k$.

For each integer $r\geq 1$ and each finite sequence of non--negative integers $\mathbf{m}=(m_1,m_2,\ldots,m_r)$ whose sum is $N=m_1 + \ldots + m_r$, define the $q$--multinomial 
$$
\left[ 
\begin{array}{c}
N\\
\mathbf{m}
\end{array}
\right]_q
:=
\left[ 
\begin{array}{c}
m_1 + \ldots + m_r \\
m_1,m_2,\ldots,m_r
\end{array}
\right]_q
= \frac{[N]^!_q}{ [m_1]^!_q \cdots [m_r]^!_q}.
$$
Given a subgroup $G$ of $S(N)$, let 
$$
\vert G \vert_q = \sum_{\sigma \in G} q^{ l(\sigma) } .
$$

\begin{proposition}\label{QBIN}
Suppose $H$ is the Young subgroup $S(m_1) \times \cdots \times S(m_r)\subseteq S(N)$ where $m_1 + \cdots + m_r = N$. Then
$$
\sum_{\sigma^0 \in D_H} q^{\inv(\sigma_0)} = \left[ 
\begin{array}{c}
m_1 + \ldots + m_r \\
m_1,m_2,\ldots,m_r
\end{array}
\right]_q 
= \frac{ \vert S(N)\vert_q}{ \vert H\vert_q}
$$
If additionally $H'$ is also a Young subgroup and $\sigma$ is some fixed element of $D_{H',H}$, then 
$$
\sum_{a \in H' \cap D_L} q^{\inv(a)} = 
\frac{ \vert H'\vert_q}{\vert L\vert_q}
$$
where $L$ is the Young subgroup generated by $\Delta_L = \Delta_{H'} \cap \sigma(\Delta_H)$.
\end{proposition}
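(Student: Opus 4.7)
The plan is to derive both identities from the length-additive decomposition of Proposition \ref{2.3.3}(b). For the first, that decomposition gives a length-preserving bijection $D_H \times H \to S(N)$ via $(\sigma^0,\bar{\sigma}) \mapsto \sigma^0\bar{\sigma}$, and summing $q^{l(\cdot)}$ on both sides factorizes as
$$
|S(N)|_q \;=\; \Bigl( \sum_{\sigma^0 \in D_H} q^{l(\sigma^0)} \Bigr)\cdot |H|_q,
$$
so the desired sum equals $|S(N)|_q/|H|_q$. I would then identify this ratio with the $q$-multinomial using the standard formula $|S(N)|_q = [N]_q^!$, which is itself proved by induction on $N$ from the same decomposition applied to $S(N-1)\subset S(N)$: the minimal-length left coset representatives are $e, s_{N-1}, s_{N-1}s_{N-2}, \ldots, s_{N-1}\cdots s_1$, of lengths $0,1,\ldots,N-1$, giving $|S(N)|_q = [N]_q \cdot |S(N-1)|_q$. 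Applied to each factor of $H$, this yields $|H|_q = [m_1]_q^!\cdots[m_r]_q^!$, and the quotient is the claimed $q$-multinomial.

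For the second identity, the key observation is that $L$ is itself a parabolic subgroup of the Coxeter group $H'$, because $\Delta_L = \Delta_{H'} \cap \sigma(\Delta_H) \subseteq \Delta_{H'}$. Two compatibility checks then reduce the claim to the first part applied internally to $H'$. First, the length function of $H'$ as an abstract Coxeter group coincides with the restriction of $l$ to $H'$: an element $a \in H'$ makes negative only those positive roots supported on $\mathrm{span}(\Delta_{H'})$, which are precisely the positive roots of $H'$ viewed as a Coxeter group in its own right. Second, by the root-system characterization in Proposition \ref{2.3.3}(c), $a \in W$ lies in $D_L$ iff $\tau_a(\Delta_L) \subseteq \Phi^+$; for $a \in H'$, the images $\tau_a(\alpha)$ with $\alpha \in \Delta_L \subseteq \Delta_{H'}$ already lie in $\mathrm{span}(\Delta_{H'})$, so positivity in $W$ is equivalent to positivity relative to $H'$. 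Hence $H' \cap D_L$ is exactly the set of minimal-length left coset representatives for $L$ in $H'$. Applying the first part of the proposition with $H'$ in place of $S(N)$ and $L$ in place of $H$ then yields $\sum_{a \in H' \cap D_L} q^{l(a)} = |H'|_q/|L|_q$.

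The main obstacle is precisely this last identification --- confirming that the ``outer'' $D_L$ intersected with $H'$ agrees with the ``inner'' set of minimal coset representatives computed inside $H'$, and that the length function restricts correctly across the inclusion $H' \hookrightarrow W$. Once these parabolic-subgroup compatibilities are in place, both identities collapse to a single length-additive factorization from Proposition \ref{2.3.3}(b) applied at two different levels; no further calculation with $q$-binomial identities is required.
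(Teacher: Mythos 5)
Your proposal is correct and follows essentially the same route as the paper: both identities come from summing $q^{l(\cdot)}$ over the length-additive coset decompositions $S(N)=D_H\cdot H$ and $H'=(H'\cap D_L)\cdot L$ from Proposition \ref{2.3.3}. The parabolic-compatibility checks you spell out (that $L$ is parabolic in $H'$, that lengths restrict, and that $H'\cap D_L$ is the set of distinguished representatives of $L$ inside $H'$) are exactly the content behind the paper's terse appeal to the decomposition $H'=(H'\cap D_L)L$, so you have simply made explicit what the paper leaves implicit.
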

\begin{proof}
The first statement is equivalent to the $q$--Binomial theorem (see e.g. Theorem 3.6 of \cite{INS}). It also follows from the Proposition \ref{2.3.3}. Namely, 
$$
[N]_q^! = \sum_{\sigma \in S(N)} q^{\inv(\sigma)} = \sum_{b \in H} \sum_{\sigma^0 \in D_H} q^{\inv(b) + \inv(\sigma^0)} = [m_1]_q^! \cdots [m_r]_q^! \sum_{\sigma^0 \in D_H} q^{\inv(\sigma^0)}.
$$ 
For the second statement, consider the decomposition $H' = (H' \cap D_L)L$. Then arguing similarly,
$$
\vert H'\vert_q = \sum_{\sigma \in S(H')} q^{\inv(\sigma)} =  \sum_{a \in H' \cap D_L} \sum_{x' \in L} q^{\inv(a) + \inv(x')} = \vert L\vert_q \sum_{a \in H' \cap D_L} q^{\inv(a)}. 
$$

\end{proof}

\subsection{Interpretation as multi--species $q$--TAZRP state space}
The state space for $n$--species $q$--TAZRP consists of particle configurations on a one--dimensional lattice. Here, we take that lattice to be $\mathbb{Z}$. At each lattice site, there may be arbitrarily many particles, with $n$ different species of particles. The state space is therefore $\left(\mathbb{Z}_{\geq 0}^n \right)^{\mathbb{Z}}$. Each $\eta \in \left(\mathbb{Z}_{\geq 0}^n \right)^{\mathbb{Z}}$ can be written as $\eta = (\eta_i^x)$, for $x \in \mathbb{Z}$ and $1 \leq i \leq n$, where $\eta_i^x$ denotes the number of particles of species $i$ located at lattice site $x$. 

In general, a particle configuration can have infinitely many particles. When restricting states with finitely many particles, there is another convenient way of writing particle configurations.  Assume there are $N_k$ particles of species $k$ ($1 \leq k \leq n$). Set $N=N_1 + \ldots + N_n$ to be the total number of particles. Let $\mathbf{N}$ denote $(N_1,\ldots,N_n)$. For $i\leq j$ let $N_{[1,j]}$ denote $N_i + \ldots + N_j$. A particle configuration can be expressed as a pair $(\mathbf{x},\sigma)$, where 
$$
\mathbf{x} = (x_1 \geq x_2 \geq \ldots \geq x_N)
$$
indicates the location of the particles. Let $\sigma \in S(N)$ denote the ordering of the species, in the sense that if $\sigma$ is written in two--line notation as
$$
\left(
\begin{array}{cccc}
\sigma_1 & \sigma_2 & \cdots & \sigma_N\\
1 & 2 & \cdots & N
\end{array}
\right),
$$
so that $\sigma_j = \sigma^{-1}(j)$, then the $N_k$ particles of species $k$ are located at the lattice sites
\begin{equation}\label{Equiv1}
x_{\sigma_{N_{[1,k-1]}+1}}, \ldots, x_{ \sigma_{  N_{[1,k]} }}.
\end{equation}

An equivalent description of the particle configuration $(\mathbf{x},\sigma)$ is as follows. For
$$
\mathbf{x} \in \mathcal{W}_N := \{ (x_1 , \ldots, x_N) : x_1 \geq \ldots \geq x_N  \} \subset \mathbb{Z}^N,
$$
define $\mathbf{m}(\mbfx)=(m_1,\ldots,m_r)$ so that 
$$
x_1 = \cdots = x_{m_1} > x_{m_1+1} = \cdots = x_{m_1+m_2} > x_{m_1+m_2+1} = \cdots = \cdots > x_{m_1+\ldots + m_{r-1}+1} = \cdots = x_N,
$$ 
 where $m_r$ is defined by $m_1+\ldots+m_r=N$.  Also define $k_1,\ldots,k_N$ by $k_1 = \cdots = k_{N_1}=1, k_{N_1+1} = \cdots = k_{N_1+N_2}=2, \ldots$. Then the particles located at the lattice site $x_{m_1 + \ldots + m_s + 1} = \cdots = x_{m_1 + \ldots + m_{s+1}}$ have species 
\begin{equation}\label{Equiv2}
k_{\sigma(m_1 + \ldots + m_s + 1)}, \ldots, k_{\sigma(m_1 + \ldots + m_{s+1})}.
\end{equation}
Note that $\sigma(j)$ is not the same as $\sigma_j$ in this notation.

Because more than one particle can occupy a site, the map $\mathcal{W}_N \times S(N) \rightarrow  (\mathbb{Z}^n_{\geq 0})^{\mathbb{Z}}$ is not injective. 

\textbf{Example 2.} Consider the particle configuration shown in the left side of Figure \ref{State}. There is more than one $\sigma\in S(N)$ which defines this particle configuration, and it is not hard to see that $\sigma = 21467358$ has the fewest inversions. In fact, this $\sigma$ is the element $s_5s_4s_3s_1s_6s_5 \in D_{H',H}$ from the previous example, where the $H=S(\mathbf{m}(\mathbf{x}))=S(1)\times S(2) \times S(2) \times S(3)$ and $H'=S(\mathbf{N})=S(1) \times S(2) \times S(2) \times S(2) \times S(1)$.

It is straightforward to see that the permutations $21476358,21567438, 35178426$ also describe the same particle configuration as $\sigma$, but have more inversions. These turn out to be in the double coset $H'\sigma H$ of $\sigma$, because it can be seen through direct calculation that the decompositions from Proposition \ref{Cox}(b) take the form
\begin{align*}
21476358 &= e\cdot s_5s_4s_3s_1s_6s_5 \cdot s_6,\\
21567438 &= s_6 \cdot s_5s_4s_3s_1s_6s_5 \cdot s_4,\\
35178426 &= s_6s_2 \cdot s_5s_4s_3s_1s_6s_5 \cdot s_7s_6s_4s_2.
\end{align*}
Note that $21476358$ is also equal to $s_4 \cdot s_5s_4s_3s_1s_6s_5 \cdot e$, demonstrating that $s_4 \notin D_L$. These calculations can be found at the end of the document.

The above example is true in general.

\begin{proposition}\label{Wawawawa}
(a) Set $H=S(\mathbf{m}(\mathbf{x}))$ and $H' = S(\mathbf{N})$. For a fixed particle configuration with particles located at $\mathbf{x}$, its fiber is $\{\mathbf{x}\} \times H'\sigma H$ for some $\sigma \in  D_{H',H}$.

(b) Suppose that the particle configuration defined by $(\mathbf{x},\sigma)$ for some $\sigma \in D_{H',H}$ has two particles of the same species $k_i = k_{i+1}$ at the same lattice site. Then $s_i \in L$ where $\Delta_L:=\Delta_{H'} \cap \tau_{\sigma}(\Delta_H)$. 

Conversely, suppose that $\sigma \in D_{H',H}$ and $s_i \in L$,  where $\Delta_L:=\Delta_{H'} \cap \tau_{\sigma}(\Delta_H)$. Then the particle configuration defined by $(\mathbf{x},\sigma)$ has two particles of the same species $k_i=k_{i+1}$ at the same lattice site.

(c) Suppose that $s_i$ satisfies the same assumptions as in (b). Suppose there is some $s_j \in H$ such that $\tau_{\sigma}s_j=s_i$. Then $s_i \sigma = \sigma s_j$. 

Conversely, suppose that $s_i \sigma = \sigma s_j$ for some $s_j \in H$. Then $\tau_{\sigma} s_j = s_i$.

\end{proposition}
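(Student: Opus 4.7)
For part (a), the plan is to check that the fiber of the map $(\mbfx,\sigma) \mapsto \eta$ over the particle configuration determined by a fixed $\mbfx$ and $\sigma$ is exactly the double coset $H'\sigma H$; then Proposition \ref{Cox}(a) finishes the job. For the inclusion $H'\sigma H \subseteq \text{fiber}$: right multiplication $\sigma \mapsto \sigma h$ with $h \in H$ permutes the positions within each same-site block $B_s = \{m_1+\cdots+m_{s-1}+1,\ldots,m_1+\cdots+m_s\}$, so the multiset of species $\{k_{\sigma h(p)}: p \in B_s\} = \{k_{\sigma(p)}: p \in B_s\}$ is unchanged at each site. Left multiplication $\sigma \mapsto h'\sigma$ with $h' \in H'$ permutes within a single species block, and since $k_{h'(\ell)} = k_\ell$ for any such $h'$, the species at every position is literally unchanged. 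For the reverse inclusion: if $(\mbfx,\sigma)$ and $(\mbfx,\sigma')$ yield the same configuration, then for each $B_s$ there is a bijection $\pi_s\colon B_s \to B_s$ with $k_{\sigma(p)} = k_{\sigma'(\pi_s(p))}$; assembling these into $\pi \in H$ and then matching labels within each species block gives $\rho \in H'$ with $\sigma = \rho\sigma'\pi$.

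For part (b), I unpack the definition $s_i \in L$ as $\alpha_i \in \Delta_{H'}$ and $\alpha_i \in \tau_\sigma(\Delta_H)$. The first condition is equivalent to $s_i \in H'$, i.e. $k_i = k_{i+1}$. The second condition says there exists $s_j \in H$ with $\tau_\sigma \alpha_j = \alpha_i$, which by the formula $\tau_\sigma \alpha_j = \alpha_{(\sigma(j),\sigma(j+1))}$ (with correct sign) means $\sigma(j) = i$, $\sigma(j+1) = i+1$, with $j, j+1$ in the same same-site block. Via \eqref{Equiv2} this precisely says the particles at consecutive positions $j, j+1$ of a common lattice site carry species $k_i = k_{i+1}$. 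The reverse direction is the content: if labels $i, i+1$ sit at the same lattice site, set $p_i = \sigma^{-1}(i)$ and $p_{i+1} = \sigma^{-1}(i+1)$; since $\sigma \in D_{H'}^{-1}$, the permutation $\sigma^{-1}$ is increasing on each species block so $p_i < p_{i+1}$, and since $\sigma \in D_H$, $\sigma$ is increasing on each same-site block, so in particular $\sigma(p_i)<\sigma(p_i+1)<\cdots<\sigma(p_{i+1})$. Since no integer sits strictly between $i$ and $i+1$, this forces $p_{i+1} = p_i + 1$; then $j = p_i$ witnesses $s_i \in L$.

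For part (c), I use that conjugation in $S(N)$ satisfies $\sigma s_j \sigma^{-1} = (\sigma(j)\ \sigma(j+1))$. The hypothesis $\tau_\sigma s_j = s_i$ (read as the root identity $\tau_\sigma \alpha_j = \alpha_i$) gives $\sigma(j) = i$, $\sigma(j+1) = i+1$, so $\sigma s_j \sigma^{-1} = s_i$, i.e. $s_i \sigma = \sigma s_j$. Conversely, $s_i\sigma = \sigma s_j$ with $s_j \in H$ implies $\{\sigma(j),\sigma(j+1)\} = \{i,i+1\}$; the constraint $\sigma \in D_H$ (increasing on each same-site block) disambiguates the sign and forces $\sigma(j) = i$, $\sigma(j+1) = i+1$, equivalent to $\tau_\sigma \alpha_j = \alpha_i$. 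The main obstacle across all three parts is the forward direction in (b), whose proof genuinely requires both constraints $\sigma \in D_H$ and $\sigma \in D_{H'}^{-1}$ simultaneously to conclude that equal-species particles at a common site must sit at positions that are consecutive integers; the corresponding existence of a valid $j$ is what makes $\alpha_i$ appear in $\tau_\sigma(\Delta_H)$.
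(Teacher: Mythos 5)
Your proof is correct, but for parts (b) and (c) it takes a genuinely different route from the paper's. The paper argues abstractly with the Coxeter machinery: for (b) it shows $s_i\sigma=\sigma s_j$ violates the uniqueness of the $a\sigma b$ decomposition of Proposition \ref{Cox}(b), deduces $s_i\notin D_L$, and then extracts $s_i\in L$ from the length-additive factorization $s_i=ax$; for (c) it tracks which positive roots are made negative, invoking Proposition \ref{Three}(a),(b). You instead work concretely in type $A$: you use the explicit action $\tau_\sigma(e_j-e_{j+1})=e_{\sigma(j)}-e_{\sigma(j+1)}$ to translate $\alpha_i\in\Delta_{H'}\cap\tau_\sigma(\Delta_H)$ directly into the conditions $k_i=k_{i+1}$ and $\sigma(j)=i,\ \sigma(j+1)=i+1$ with $s_j\in H$, and you characterize $D_H$ and $D_{H'}^{-1}$ as permutations increasing on same-site and species blocks respectively. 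This buys two things: part (c) collapses to the elementary conjugation identity $\sigma s_j\sigma^{-1}=(\sigma(j)\ \sigma(j+1))$, and the forward direction of (b) gets a step the paper elides --- the paper's ``Suppose that the lattice site is $x_j=x_{j+1}$. Then $s_i\sigma=\sigma s_j$'' tacitly assumes the two equal-species particles occupy \emph{adjacent} positions, whereas you actually prove $\sigma^{-1}(i+1)=\sigma^{-1}(i)+1$ by squeezing $i=\sigma(p_i)<\cdots<\sigma(p_{i+1})=i+1$ using both block-monotonicity constraints. The trade-off is that your argument is specific to the symmetric group, while the paper's is phrased to run in any Coxeter group; since the proposition is intrinsically about particle configurations (type $A$), nothing is lost here. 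Part (a) is the same argument as the paper's, just written out in more detail.
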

\begin{proof}
(a)  By \eqref{Equiv2}, $(\mathbf{x},\sigma)$ and $(\mathbf{x},\sigma b)$ define the same particle configuration if and only if $b\in H$. By \eqref{Equiv1}, $(\mathbf{x},\sigma b)$ and $(\mathbf{x},a\sigma b)$ define the same particle configuration if and only if $a\in H'$. This shows part (a).

(b) Suppose that the lattice site is $x_j=x_{j+1}$. Then $s_i \sigma = \sigma s_j$, violating the uniqueness condition in \ref{Cox}(b), which can only hold if $s_i \notin D_L$. Since $s_i\notin D_L$, then by Proposition \ref{2.3.3}(b) it can be written as $s_i=ax$ for some $a\in D_L$ and some non--identity $x\in L$, such that $l(s_i) = l(a) + l(x)$. But $l(s_i)=1$ and $l(x) \geq 1$, which must imply that $a=e$, and thus $s_i \in L$.

Conversely, suppose that $s_i \in L$. Then $s_i \notin D_L$, and so by the uniqueness property of Proposition \ref{Cox}(b), $s_i \sigma = a\sigma b$ where either $a \in H' \cap D_L$ or $b\in H$. The element $b$ cannot be the identity element $e$, for otherwise $s_i = a \in D_L$. Therefore $l(s_i \sigma) = l(a) + l(\sigma) + l(b) \geq l(\sigma) + 1$, so by Proposition \ref{Three}(c) the element $a$ must be the identity and the element $b$ must be some $s_j \in H$. Therefore the lattice site $x_j=x_{j+1}$ contains two particles of species $k_i = k_{i+1}$.

(c) Assume that $\tau_{\sigma}s_j = s_i$. Then $\tau_{s_i \sigma}s_j = -s_i$, so by Proposition \ref{Three}(b), $l(s_i \sigma) = l(\sigma) + 1$. By the unique decomposition, $s_i \sigma = \sigma s_l$ for some $s_l \in H$. But this implies that $\tau_{\sigma}\tau_{s_l} s_j  = \tau_{\sigma s_l} s_j = \tau_{s_i \sigma}s_j = -s_i$, so by assumption $\tau_{s_l}s_j = -s_j$. This can only hold if $s_l=s_j$, as needed.

For the converse implication, we have that $l(s_i \sigma) = 1 + l(\sigma)$. Therefore $\tau_{s_i}$ takes a positive root in $\tau_{\sigma}(\Phi^+)$ and makes it negative. By Proposition \ref{Three}(a), this root must be $s_i$. In other words, $s_i \in \tau_{\sigma}(\Phi^+)$ and $s_i \notin \tau_{s_i \sigma}(\Phi^+) = \tau_{\sigma s_j}(\Phi^+)$. Therefore $\tau_{\sigma}^{-1}(s_i) \in \Phi^+$ and $\tau_{s_j}\tau_{\sigma}^{-1} (s_i )\notin \Phi^+$. This implies that $\tau_{\sigma}^{-1} s_i = s_j$, as needed.
\end{proof}

\textbf{Remark}. Part (a) of the proposition appears to be equivalent to results in \cite{Jo}, where the bijection is stated in terms of arrays rather than particle configurations. Part (c) can also be seen by comparing Theorem 2.7.4 of \cite{BOOK} and Lemma 2 of \cite{Solomon}, which had been previously announced in \cite{Tits1}, p. 26 and appeared (in geometric form) in \cite{Tits2}, section 12.2.

The previous theorem motivates the following definition. Given a particle configuration $(\mathbf{x},\sigma)$, where $\mathbf{m}(\mathbf{x})=(m_1,\ldots,m_r)$ and the species numbers are given by $(N_1,\ldots,N_n)$, let $L_{ij}$ denote the number of species $j$ particles located at the lattice site $x_{m_1 + \ldots + m_i}$. Here the ranges of $i$ and $j$ are given by $1 \leq i \leq r$ and $1 \leq j \leq n$.

\begin{figure}
\begin{center}
\begin{tikzpicture}[scale=0.9, every text node part/.style={align=center}]
\usetikzlibrary{arrows}
\usetikzlibrary{shapes}
\usetikzlibrary{shapes.multipart}

\draw (-1,0)--(3,0);
\draw (-1,0)--(-1,0.66);
\draw (-0.33,0)--(-0.33,0.66);
\draw (0.33,0)--(0.33,0.66);
\draw (1,0)--(1,0.66);
\draw (1.66,0)--(1.66,0.66);
\draw (2.33,0)--(2.33,0.66);
\draw (3,0)--(3,0.66);
\draw (0.66,0.33) circle (8pt);
\draw (0,0.33) circle (8pt);
\draw (0,1) circle (8pt);
\draw (0,1.66) circle (8pt);
\draw (0.66,1) circle (8pt);
\draw (2,1) circle (8pt);
\draw (2,0.33) circle (8pt);
\draw (2.66,0.33) circle (8pt);

\node at (0,1.66) {$5$};
\node at (0,1) {$3$};
\node at (0,0.33) {$3$};
\node at (0.66,0.33) {$2$};
\node at (0.66,1)  {$4$};
\node at (2,1) {$4$};
\node at (2,0.33) {$1$};
\node at (2.66,0.33) {$2$};

\draw  (6,0.5) circle (8pt);
\draw  (6.66,0.5) circle (8pt);
\draw  (7.33,0.5) circle (8pt);
\draw  (8,0.5)  circle (8pt);
\draw  (8.66,0.5) circle (8pt);
\draw  (9.33,0.5) circle (8pt);
\draw  (10,0.5) circle (8pt);
\draw  (10.66,0.5) circle (8pt);

\node at (5.3,1.16) {$\sigma=$};
\node at (6,1.16) {$2$};
\node at (6.66,1.16) {$1$};
\node at (7.33,1.16) {$4$};
\node at (8,1.16)  {$6$};
\node at (8.66,1.16) {$7$};
\node at (9.33,1.16) {$3$};
\node at (10,1.16) {$5$};
\node at (10.66,1.16) {$8$};

\node at (6,0.5) {$1$};
\node at (6.66,0.5) {$2$};
\node at (7.33,0.5) {$2$};
\node at (8,0.5)  {$3$};
\node at (8.66,0.5) {$3$};
\node at (9.33,0.5) {$4$};
\node at (10,0.5) {$4$};
\node at (10.66,0.5) {$5$};

\end{tikzpicture}
\end{center}
\caption{The particle configuration referenced in Example 2.
}
\label{State}
\end{figure}
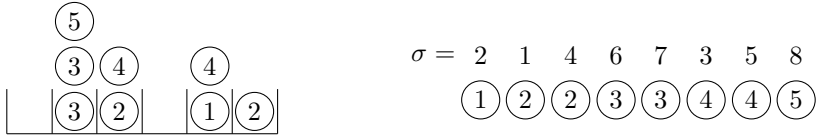

\section{$q$--TAZRP}\label{2}

\subsection{Dynamics}
Let us define the dynamics of the multi--species $q$--TAZRP. A visual example is shown in Figure \ref{TAZER}.

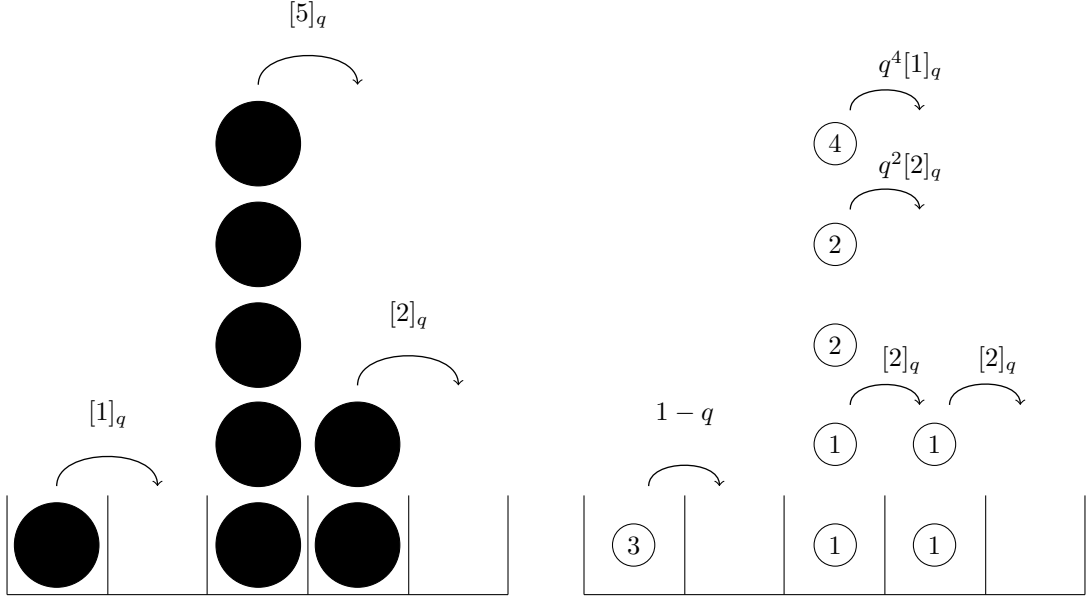
\begin{figure}
\begin{center}
\begin{tikzpicture}[scale=2, every text node part/.style={align=center}]
\usetikzlibrary{arrows}
\usetikzlibrary{shapes}
\usetikzlibrary{shapes.multipart}
\tikzstyle{arrow}=[->,>=stealth,thick,rounded corners=4pt]

\draw (0.33,0)--(3.66,0);
\draw (0.33,0)--(0.33,0.66);
\draw (1,0)--(1,0.66);
\draw (1.66,0)--(1.66,0.66);
\draw (2.33,0)--(2.33,0.66);
\draw (3,0)--(3,0.66);
\draw (3.66,0)--(3.66,0.66);
\draw[fill=black] (0.66,0.33) circle (8pt);
\draw[fill=black] (2,0.33) circle (8pt);
\draw[fill=black] (2,1) circle (8pt);
\draw[fill=black] (2,1.66) circle (8pt);
\draw[fill=black] (2,2.33) circle (8pt);
\draw[fill=black] (2,3) circle (8pt);
\draw[fill=black] (2.66,0.33) circle (8pt);
\draw[fill=black] (2.66,1) circle (8pt);

\node at (2,3.33) (abc) {};
\node at (2.66,3.33) (abd) {};
\node at (2.33,3.86) {$[5]_q$};
\node at (2.66,1.33) (abe) {};
\node at (3.33,1.33) (abf) {};
\node at (3,1.86) {$[2]_q$};
\node at (0.66,0.66) (abx) {};
\node at (1.33,0.66) (aby) {};
\node at (1,1.2) {$[1]_q$};
\draw (abc) edge[out=90,in=90,->, line width=0.5pt] (abd);
\draw (abe) edge[out=90,in=90,->, line width=0.5pt] (abf);
\draw (abx) edge[out=90,in=90,->, line width=0.5pt] (aby);
\end{tikzpicture}
\hspace{0.3in}
\begin{tikzpicture}[scale=2, every text node part/.style={align=center}]
\usetikzlibrary{arrows}
\usetikzlibrary{shapes}
\usetikzlibrary{shapes.multipart}
\tikzstyle{arrow}=[->,>=stealth,thick,rounded corners=4pt]

\draw (0.33,0)--(3.66,0);
\draw (0.33,0)--(0.33,0.66);
\draw (1,0)--(1,0.66);
\draw (1.66,0)--(1.66,0.66);
\draw (2.33,0)--(2.33,0.66);
\draw (3,0)--(3,0.66);
\draw (3.66,0)--(3.66,0.66);
\draw (0.66,0.33) circle (4pt);
\draw  (2,0.33) circle (4pt);
\draw  (2,1) circle (4pt);
\draw  (2,1.66) circle (4pt);
\draw  (2,2.33) circle (4pt);
\draw (2,3) circle (4pt);
\draw  (2.66,0.33) circle (4pt);
\draw  (2.66,1) circle (4pt);

\node at (0.66,0.33) {$3$};
\node  at (2,0.33) {$1$};
\node  at (2,1) {$1$};
\node  at (2,1.66) {$2$};
\node  at (2,2.33) {$2$};
\node at (2,3) {$4$};
\node at (2.66,0.33) {$1$};
\node at (2.66,1) {$1$};

\node at (2.1,3.16) (abc) {};
\node at (2.56,3.16) (abd) {};
\node at (2.50,3.52) {$q^4[1]_q$};
\node at (2.1,2.5) (abM) {};
\node at (2.56,2.5) (abN) {};
\node at (2.50,2.85) {$q^2[2]_q$};
\node at (2.1,1.2) (wtf) {};
\node at (2.56,1.2) (lol) {};
\node at (2.76,1.2) (abe) {};
\node at (3.23,1.2) (abf) {};
\node at (3.08,1.56) {$[2]_q$};
\node at (2.44,1.56) {$[2]_q$};
\node at (0.76,0.66) (abx) {};
\node at (1.23,0.66) (aby) {};
\node at (1,1.2) {$1-q$};
\draw (abc) edge[out=90,in=90,->, line width=0.5pt] (abd);
\draw (abM) edge[out=90,in=90,->, line width=0.5pt] (abN);
\draw (abe) edge[out=90,in=90,->, line width=0.5pt] (abf);
\draw (abx) edge[out=90,in=90,->, line width=0.5pt] (aby);
\draw (wtf) edge[out=90,in=90,->, line width=0.5pt] (lol);
\end{tikzpicture}

\end{center}

\caption{Example of jump rates for the homogeneous $q$--TAZRP, with time scaled by $1-q$. The left image shows single--species and the right image shows multi--species. Note that $q^4[1]_q + q^2[2]_q + [2]_q  = [5]_q$.}
\label{TAZER}
\end{figure}

For a particle configuration $\xi = (\xi_i^x)$, the jump rates for an $i$th species particle at lattice site $x$ to jump one step to the right is 
$$
b_x \cdot q^{\xi_1^x + \ldots + \xi_{i-1}^x} [\xi_i^x]_q,
$$
where $(b_x)_{x\in \mathbb{Z}}$ are the inhomogeneity parameters, which we assume to be positive uniformly bounded.

Let us define the generator explicitly. if $\eta=(\eta_i^x)$ and $\xi=(\xi_i^x)$ are related by 
$$
\eta_j^y = \xi_j^y -1, \quad \eta_j^{y+1} = \xi_j^{y+1}+1
$$
for some $j,y$, with all $\eta_i^x = \xi_i^x$ for all other values of $i,x$, then write $\eta=\xi(j,y)$. Then the generator is defined by 
$$
L_{\text{mqTAZRP}}(\xi, \xi(j,y)) = b_y q^{\xi_1^y + \ldots + \xi_{j-1}^y} [\xi_j^y]_q .
$$
If the particle configuration $\xi(j,y)$ does not exist, then formally set $\xi(j,y)=\xi$.

\subsection{Initial conditions}
We now define a class of probability measures on the multi--species $q$--TAZRP state space. By Proposition \ref{Wawawawa}, a particle configuration with $N$ particles can be written uniquely as $(\mathbf{x},\sigma)$ where $\sigma \in D_{H',H}$, $H'=S(\mathbf{N}),H=S(\mathbf{m}(\mathbf{x}))$. 

\begin{definition}
A probability measure on $\displaystyle\coprod_{\mathbf{x} \in \mathcal{W}_N} \{\mathbf{x}\} \times D_{H',H}$ is $q$--exchangeable if 
$$
q^{-l(\sigma)} \cdot  \text{Prob}(\mathbf{x},\sigma)  = q^{-l(\sigma')}  \cdot \text{Prob}(\mathbf{x},\sigma') 
$$
for all $\mathbf{x}\in \mathcal{W}_N$ and $\sigma,\sigma' \in D_{H',H}$. 

If a $q$--exchangeable probability measure satisfies
$$
\text{Prob}(\mathbf{x},\sigma) = 1_{\{\mathbf{x} = \mathbf{y}\}} \frac{q^{l(\sigma)}}{Z},
$$
then we say that it is supported on $\mathbf{y}$.
\end{definition}

See \cite{GO,GO2} for a general treatment of $q$--exchangeable measures.

\subsection{Markov Projections}
From the identity
\begin{equation}\label{id}
[a]_q + q^a[b]_q = [a+b]_q,
\end{equation}
it is not hard\footnote{ It was first explicitly stated and proved in \cite{K} for the homogeneous case (see also \cite{K17} for a more general totally asymmetric zero range process), but the proof for the in homogeneous multi--species $q$--TAZRP is not hard.} to see that there is a Markov projection property.  Namely, suppose $\Pi$ is a partition of $\{1,\ldots,n\}$ into $k$ blocks of consecutive integers:
$$
\Pi = \{ \Pi_1, \ldots, \Pi_k\} = \{ \{ 1, \ldots, p_1\}, \{p_1 + 1, \ldots, p_1 + p_2\}, \ldots \{p_1 + \ldots + p_{k-1} + 1, \ldots, n\} \}.
$$
Then there is a corresponding projection 
$$
\pi: (\mathbb{Z}_{\geq 0}^n )^{\mathbb{Z}} \rightarrow (\mathbb{Z}_{\geq 0}^k )^{\mathbb{Z}}
$$ 
such that for $\eta = \pi(\xi)$,
$$
\eta_i^x = \sum_{j \in \Pi_i} \xi_j^x, \quad \text{ for } 1 \leq i \leq k.
$$
The Markov projection property here says that
$$
L_{\text{mqTAZRP}}( \pi(\xi) ,\eta  ) = \sum_{ \psi \in \pi^{-1}(\eta)} L_{\text{mqTAZRP}}(  \xi , \psi ), \quad \quad \eta \in (\mathbb{Z}_{\geq 0}^k)^{\mathbb{Z}}, \quad \xi \in (\mathbb{Z}_{\geq 0}^n)^{\mathbb{Z}}.
$$

\subsection{Previous results from single--species $q$--TAZRP}
\subsubsection{Stationary measures}
For a (single--species) zero range process with certain mild conditions, the stationary measures can be found (see \cite{AND}). When $n=1$, and all $b_y$ are equal (i.e. in the homogeneous case), the stationary measures are given by
\begin{equation}\label{Stationary}
\mathbb{P}^{\alpha}(\xi^x_1 = k) = (\alpha;q)_{\infty} \frac{\alpha^k}{(q;q)_k},
\end{equation}
where $\alpha \in [0,1)$ is a parameter. Note that when $\alpha\rightarrow 1$, the normalization factor $(\alpha;q)_{\infty}$ equals $0$. Thus, even though $\mathbb{P}^1$ still defines a measure preserved by the generator, but it is not a probability measure. 


\subsubsection{Transition Probabilities for the single--species $q$--TAZRP}
Here, let 
$$
S_{(\beta,\alpha)} = - \frac{q w_{\beta} - w_{\alpha}}{q w_{\alpha} - w_{\beta}}
$$
and set
$$
A_{\sigma} = \prod_{(\beta,\alpha) \text{ is an inversion of } \sigma} S_{(\beta,\alpha)}.
$$
By Theorem 1.1 of \cite{WW}, which generalizes the homogeneous case proved in Theorem 2.6 of \cite{KL}, given an initial condition $Y=(y_1,\ldots,y_N)$ and another particle configuration $X=(x_1,\ldots,x_N)$, the transition probabilities are 
\begin{equation}\label{Wawa}
P_Y(X;t) =  \frac{1}{[N]_q^!}  \left[ 
\begin{array}{c}
N\\
\mathbf{m}(\mbfx)
\end{array}
\right]_q \left( \frac{1}{2\pi i}\right)^N \int_{\mathcal{C}_R} \cdots \int_{\mathcal{C}_R} \sum_{\sigma \in S_N} A_{\sigma} \prod_{j=1}^N \left[ \prod_{k=y_{\sigma(j)}}^{x_j} \left( \frac{b_k}{b_k - w_{\sigma(j)}}\right) e^{-w_jt}  \right] dw_1 \cdots dw_N.
\end{equation}
where the contours are counterclockwise circles centered at the origin with large radius $R$. The poles $b_k$ and $qw_i$ need to be enclosed, but not $q^{-1}w_l$, where $i<j$ and $l>j$. Recall that the $b_k$ need to be bounded. The usual definition of the product is extended to 
$$
\prod_{k=m}^n f(k) 
= 
\begin{cases}
\prod_{k=m}^n f(k), & \text{ if } n \geq m \\
1, & \text{ if } n=m-1 \\
\prod_{k=n+1}^{m-1} \frac{1}{f(k)}, & \text{ if } n<m-1.
\end{cases}
$$

In Remark 1.2 of \cite{WW}, it is remarked that the pre--factor $\dfrac{1}{[N]_q^!}  \left[ 
\begin{array}{c}
N\\
\mathbf{m}(\mbfx)
\end{array}
\right]_q$ 
is related to the stationary measures of the $q$--TAZRP, with a brief explanation given. A similar statement will hold for the multi--species $q$--TAZRP. 

\subsection{Statement of Main Theorems}
Recall the definition of $L_{ij}$ at the end of section \ref{1}.
\begin{theorem}\label{Main}
Let $\mathbf{N}=(N_1,\ldots,N_n)$. Fix $\mathbf{y}=(y_1,\ldots,y_N)$ and $\mathbf{x}=(x_1,\ldots,x_N)$, and let $\sigma \in D_{H',H}$ where $H'=S(\mathbf{N})$ and $H=S(\mathbf{m}(\mathbf{x}))$. Given $q$--exchangeable initial conditions supported at $\mathbf{y}$,
\begin{multline*}
\mathrm{Prob}((\mathbf{x},\sigma) \text{ at time } t) = \frac{q^{\inv(\sigma)}}{[N]_q^!} \cdot  \left( \frac{ \prod_{j=1}^n [N_j]_q^!}{ \prod_{j=1}^n \prod_{i=1}^r [L_{ij}]_q^!}\right) \\
\times  \left( \frac{1}{2\pi i}\right)^N \int_{\mathcal{C}_R} \cdots \int_{\mathcal{C}_R} \sum_{\tau \in S_N} A_{\tau} \prod_{j=1}^N \left[ \prod_{k=y_{\tau(j)}}^{x_j} \left( \frac{b_k}{b_k - w_{\tau(j)}}\right) e^{-w_jt}  \right] dw_1 \cdots dw_N
\end{multline*}
\end{theorem}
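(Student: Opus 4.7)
The plan is to lift the $n$-species process to its fully labeled refinement (the $N$-species $q$-TAZRP with $\mathbf{N}=(1,\ldots,1)$, so $H'=\{e\}$ and $D_{H',H}=D_H$), combine with the single-species formula \eqref{Wawa}, and then project back to $n$ species using the Coxeter double-coset combinatorics of Lemma \ref{Moo}. The three ingredients are the Markov projection property, a $q$-exchangeability preservation claim for the labeled dynamics, and the single-species contour integral.

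First, by the Markov projection that collapses all $n$ species into one, the marginal distribution of the positions $\mathbf{x}$ at time $t$ equals the single-species transition probability $P_{\mathbf{y}}(\mathbf{x},t)$ from \eqref{Wawa}. Second, consider the fully labeled process: each particle carries a unique label and the label at stack position $j$ (from the top) at site $x$ jumps to $x+1$ at rate $b_x q^{j-1}$. Grouping labels into $n$ species via \eqref{id} recovers the multi-species dynamics, and the $n$-species state corresponding to a labeled state $(\mathbf{x},\pi)$ with $\pi\in D_H$ is obtained by taking the unique double-coset representative $\sigma\in D_{H',H}$ of $H'\pi H$. Viewed through this lift, the initial condition of the theorem is the projection of a labeled $q$-exchangeable measure supported at $\mathbf{y}$ (where $\mathrm{Prob}^{(\mathrm{lab})}(\mathbf{y},\pi,0)\propto q^{l(\pi)}$ for $\pi\in D_{H_\mathbf{y}}$).

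The key claim is that $q$-exchangeability is preserved by the labeled dynamics: $\mathrm{Prob}^{(\mathrm{lab})}(\mathbf{x},\pi,t)\propto q^{l(\pi)}$ for $\pi\in D_{H_\mathbf{x}}$ at each later time. I would prove this by verifying that the ansatz $\mathrm{Prob}^{(\mathrm{lab})}(\mathbf{x},\pi,t)=q^{l(\pi)}g(\mathbf{x},t)$ satisfies the Kolmogorov forward equation: the total exit rate from $(\mathbf{x},\pi)$ is $\sum_x b_x[\xi^x]_q$, which depends only on $\mathbf{x}$, while the inflow into each $(\mathbf{x},\pi)$ collapses via \eqref{id} into a factor $q^{l(\pi)}$ times the single-species inflow. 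Combining this with the position marginal (using $|D_H|_q=|W|_q/|H|_q$ from Proposition \ref{QBIN}) gives
\[
\mathrm{Prob}^{(\mathrm{lab})}(\mathbf{x},\pi,t) = \frac{q^{l(\pi)}|H|_q}{|W|_q}\,P_{\mathbf{y}}(\mathbf{x},t).
\]
Summing over the fiber of $\pi\mapsto\sigma$, which by Lemma \ref{Moo} equals $\{a\sigma:a\in H'\cap D_L\}$ with lengths adding, and using $\sum_{a\in H'\cap D_L}q^{l(a)}=|H'|_q/|L|_q$ from Proposition \ref{QBIN}, yields
\[
\mathrm{Prob}((\mathbf{x},\sigma),t) = \frac{q^{l(\sigma)}|H|_q|H'|_q}{|L|_q|W|_q}\,P_{\mathbf{y}}(\mathbf{x},t).
\]
Substituting $P_{\mathbf{y}}(\mathbf{x},t)=\frac{|W|_q}{|H|_q [N]_q^!}\cdot(\text{contour integral})$ and rewriting $|H'|_q=\prod_j [N_j]_q^!$ and $|L|_q=\prod_{i,j}[L_{ij}]_q^!$ reproduces the theorem's formula.

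The main obstacle is the preservation of $q$-exchangeability under the labeled dynamics. Checking the inflow-outflow balance requires a careful case analysis of how each label jump affects both the position array $\mathbf{x}$ and the distinguished representative $\pi\in D_H$, especially when the moving label leaves its site empty or merges into an existing pile at the destination. The central algebraic point is that a label at stack position $j$ moves at rate $b_x q^{j-1}$, which matches precisely the change in $l(\pi)$ induced on the distinguished representative, so that sums over contributing precursors collapse via \eqref{id} --- the same identity underlying the Markov projection property.
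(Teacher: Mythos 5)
The proposal is correct and follows essentially the same route as the paper: reduce to the fully labeled case $\mathbf{N}=(1,\ldots,1)$, show that $q$--exchangeability (i.e.\ $\mathrm{Prob}((\mathbf{x},\pi);t)\propto q^{l(\pi)}$ over $\pi\in D_H$) is preserved by the dynamics via the master equation, identify the position marginal with the single--species formula \eqref{Wawa} using Proposition \ref{QBIN}, and then sum over the fiber $\{a\sigma: a\in H'\cap D_L\}$ from Lemma \ref{Moo} to recover the $q$--multinomial prefactor. The only difference is cosmetic: you verify the $q^{l(\pi)}$ ansatz directly in the forward equation, whereas the paper runs an induction on $x_1+\cdots+x_N$ comparing the equations for $\mathrm{Prob}((\mathbf{x},\sigma);t)$ and $q\,\mathrm{Prob}((\mathbf{x},\sigma s_j);t)$ — and you correctly flag the rate--versus--length bookkeeping (the paper's Lemmas \ref{PREV} and the $\overline{\sigma}=\hat{\sigma}b$ decomposition) as the remaining work.
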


\begin{remark}

When $\mathbf{N}=(N)$, the product in the denominator simplifies to $\prod_{i=1}^r [m_i]_q^!$, and $D_{H',H}=\{e\}$, so the theorem reduces to the single--species case.  If $\mathbf{N}=(1,\ldots,1)$, then the $q$--multinomial terms are all equal to $1$. 
\end{remark}

\begin{theorem}\label{lamelol}
For $\alpha\in [0,1)$, let $\mathbb{P}^{(\alpha)}$ be the stationary measure for homogeneous single species $q$--TAZRP, defined by \eqref{Stationary}. Then the $q$--exchangeable probability measure defined by 
$$
\mathrm{Prob}(\mathbf{x},\sigma) =\frac{q^{l(\sigma)}}{Z} \  \mathbb{P}^{(\alpha)}(\mathbf{x}) 
$$
is a stationary measure for the homogeneous multi--species $q$--TAZRP.
\end{theorem}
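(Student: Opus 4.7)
The plan is to deduce the theorem from Theorem \ref{Main} by viewing $\mu(\mathbf{x},\sigma) := q^{l(\sigma)}\mathbb{P}^{(\alpha)}(\mathbf{x})/Z$ as an average over initial conditions and invoking the single-species stationarity of $\mathbb{P}^{(\alpha)}$ recorded in \eqref{Stationary}. First I would decompose $\mu$ as a superposition
\begin{equation*}
\mu \;=\; \sum_{\mathbf{y} \in \mathcal{W}_N} c(\mathbf{y})\, \nu_{\mathbf{y}},
\end{equation*}
where $\nu_{\mathbf{y}}$ is the $q$-exchangeable probability measure supported on $\mathbf{y}$ (normalized by $Z_{\mathbf{y}} = \sum_{\sigma_0 \in D_{H',H(\mathbf{y})}}q^{l(\sigma_0)}$) and $c(\mathbf{y}) = Z_{\mathbf{y}}\mathbb{P}^{(\alpha)}(\mathbf{y})/Z$. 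This is valid because $\mu$ is $q$-exchangeable and its fiber over $\mathbf{y}$ has total weight $c(\mathbf{y})$.

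Next, apply Theorem \ref{Main} to each $\nu_{\mathbf{y}}$. The crucial structural feature is that the prefactor
\begin{equation*}
K(\mathbf{x},\sigma) \;=\; \frac{q^{l(\sigma)}}{[N]_q^!}\cdot\frac{\prod_{j=1}^n [N_j]_q^!}{\prod_{i,j}[L_{ij}]_q^!}
\end{equation*}
depends only on $(\mathbf{x},\sigma)$ and not on $\mathbf{y}$, so by linearity
\begin{equation*}
(e^{tL_{\mathrm{mqTAZRP}}}\mu)(\mathbf{x},\sigma) \;=\; K(\mathbf{x},\sigma)\sum_{\mathbf{y} \in \mathcal{W}_N} c(\mathbf{y})\, P_{\mathbf{y}}^{\mathrm{single}}(\mathbf{x};t),
\end{equation*}
where $P_{\mathbf{y}}^{\mathrm{single}}(\mathbf{x};t)$ is the single-species transition probability \eqref{Wawa}. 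In the homogeneous case $b_y \equiv 1$, the measure $\mathbb{P}^{(\alpha)}$ is stationary for the single-species dynamics, so $\sum_{\mathbf{y}}\mathbb{P}^{(\alpha)}(\mathbf{y}) P_{\mathbf{y}}^{\mathrm{single}}(\mathbf{x};t) = \mathbb{P}^{(\alpha)}(\mathbf{x})$, and after accounting for the $Z_{\mathbf{y}}$ factor in $c(\mathbf{y})$ via Proposition \ref{QBIN}, the right-hand side collapses to a $t$-independent expression that must equal $\mu(\mathbf{x},\sigma)$.

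The main obstacle is the final combinatorial matching: one must verify that the $\mathbf{y}$-dependent normalization $Z_{\mathbf{y}}$, combined with $K(\mathbf{x},\sigma)$ and the collapsed single-species marginal $\mathbb{P}^{(\alpha)}(\mathbf{x})$, rearranges exactly into $q^{l(\sigma)}\mathbb{P}^{(\alpha)}(\mathbf{x})/Z$. Using $(q;q)_{m_i} = (1-q)^{m_i}[m_i]_q^!$ together with Lemma \ref{Moo} and Proposition \ref{QBIN}, the identity should reduce to a single $q$-multinomial equality linking the site multiplicities $m_i = \sum_j L_{ij}$, the species counts $N_j = \sum_i L_{ij}$, and the $q$-analog of $|D_{H',H}|$. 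This is essentially a bookkeeping exercise, and once in place one concludes $e^{tL_{\mathrm{mqTAZRP}}}\mu = \mu$ for all $t$, giving stationarity.
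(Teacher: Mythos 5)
Your route is genuinely different from the paper's — the paper never invokes Theorem \ref{Main} here, but instead verifies stationarity directly by applying the master--equation operator to the density $q^{\inv(\sigma)}/\prod_{s,j}[m_s^{(j)}]_q^!$, using $[a]_q+q^a[b]_q=[a+b]_q$ to expand $\mathrm{wt}(\mathbf{x})$ and reducing everything to the local identity $\inv(\hat\sigma)+m_{s+1}^{(1)}+\cdots+m_{s+1}^{(j-1)}=\inv(\sigma)+m_s^{(j+1)}+\cdots+m_s^{(n)}$. Unfortunately your argument has a genuine gap at its central step, the ``collapse'' of $\sum_{\mathbf{y}}c(\mathbf{y})P^{\mathrm{single}}_{\mathbf{y}}(\mathbf{x};t)$. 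With your normalization $c(\mathbf{y})=Z_{\mathbf{y}}\,\mathbb{P}^{(\alpha)}(\mathbf{y})/Z$, where $Z_{\mathbf{y}}=\sum_{\sigma_0\in D_{H',H(\mathbf{y})}}q^{l(\sigma_0)}$, this sum is the single--species evolution of the measure $\mathbf{y}\mapsto Z_{\mathbf{y}}\mathbb{P}^{(\alpha)}(\mathbf{y})$, \emph{not} of $\mathbb{P}^{(\alpha)}$. The factor $Z_{\mathbf{y}}$ depends on the clustering pattern $\mathbf{m}(\mathbf{y})$ and cannot be pulled out of the sum over $\mathbf{y}$: for $N=2$, $\mathbf{N}=(1,1)$ one has $Z_{\mathbf{y}}=1+q$ when the two particles are at distinct sites and $Z_{\mathbf{y}}=1$ when they coincide, so $Z_{\mathbf{y}}\mathbb{P}^{(\alpha)}(\mathbf{y})$ assigns relative weights $(1+q)^2:1$ to separated versus coinciding configurations, whereas the single--species stationary ratio is $(1+q):1$. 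Hence $\sum_{\mathbf{y}}c(\mathbf{y})P^{\mathrm{single}}_{\mathbf{y}}(\mathbf{x};t)$ is genuinely $t$--dependent. Proposition \ref{QBIN} cannot repair this: it evaluates coset sums for a \emph{fixed} pair of Young subgroups and does nothing to the $\mathbf{y}$--dependence sitting inside the sum over initial conditions. Asserting that $Z_{\mathbf{y}}\mathbb{P}^{(\alpha)}(\mathbf{y})$ is single--species stationary is essentially the $\mathbf{x}$--marginal of the statement you are trying to prove, so the argument becomes circular exactly where you defer to ``bookkeeping.''

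The deferred combinatorial matching is also where the real content hides: the correct stationary density on particle configurations carries $\prod_{s,j}[m_s^{(j)}]_q^!=\prod_{i,j}[L_{ij}]_q^!$ in the denominator (as in the paper's proof), which differs from $\prod_s[m_s]_q^!$ coming from $\mathbb{P}^{(\alpha)}(\mathbf{x})$ alone; reconciling the two forces you to sum $q^{l(\cdot)}$ over the full fiber $H'\sigma H$ rather than over $D_{H',H}$, i.e.\ to use the constant normalization $[N]_q^!$ in place of $Z_{\mathbf{y}}$. With that change your superposition strategy can be made to work (the prefactor of Theorem \ref{Main} times $\prod_i[m_i]_q^!\cdot\mathbb{P}^{(\alpha)}(\mathbf{x})$ does reproduce $q^{l(\sigma)}\prod_j[N_j]_q^!/\bigl([N]_q^!\prod_{i,j}[L_{ij}]_q^!\bigr)$), though one must still address the fact that $\mathbb{P}^{(\alpha)}$ on $\mathbb{Z}$ charges only configurations with infinitely many particles, so the reduction to the finite--$N$ Theorem \ref{Main} is formal; the paper's direct generator computation is local and avoids this entirely.
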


\subsection{Proof of main theorems}

\subsubsection{The simplest case}
Consider the simplest case, when there is exactly one particle of each species. From a probabilistic perspective, one might expect the ``simplest'' case to be when there is only one particle of species number $2$ with other particles of species number $1$. However, from the algebraic perspective, the subgroup $H'=S(\mathbf{N})\subseteq S(N)$ is trivial when $H'=\{e\}$ or $S(N)$, and the former situation corresponds to having exactly one particle of each species, while the latter situation corresponds to having only species $1$ particles. 

By the Markov projection property, the more general case follows from this simple case immediately. To see this, suppose that we have already shown Theorem \ref{Main} when $\mathbf{N}=(1,\ldots,1)$.  Then $\sigma^0$ can take any value in $D_H$, and for any $\sigma^0\in D_H$,
\begin{multline*}
\mathrm{Prob}((\mathbf{x},\sigma^0) \text{ at time } t) \\= \frac{q^{\inv(\sigma^0)}}{[N]_q^!}  \left( \frac{1}{2\pi i}\right)^N \int_{\mathcal{C}_R} \cdots \int_{\mathcal{C}_R} \sum_{\tau \in S_N} A_{\tau} \prod_{j=1}^N \left[ \prod_{k=y_{\tau(j)}}^{x_j} \left( \frac{b_k}{b_k - w_{\tau(j)}}\right) e^{-w_jt}  \right] dw_1 \cdots dw_N
\end{multline*}

Now consider the general case of $\mathbf{N}=(N_1,\ldots,N_n)$ and set $H'=S(\mathbf{N})$. The projection map $(\mathbb{Z}_{\geq 0}^N )^{\mathbb{Z}}\rightarrow (\mathbb{Z}_{\geq 0}^N )^{\mathbb{Z}}$ can be expressed in terms of permutations in $S(N)$ in the following way. By Lemma \ref{Moo},  any $\sigma^0 \in D_H$ can be uniquely decomposed as $\sigma^0=a \sigma$ where $a \in H'\cap D_L$ and $\sigma \in D_{H',H}$, and in this decomposition
$$
\pi(\mathbf{x},\sigma^0) = (\mathbf{x},\sigma)
$$ 
Lemma \ref{Moo} also implies that for any fixed $\sigma \in D_{H',H}$ the set of all $\sigma^0$ such that $\pi(\mathbf{x},\sigma^0) = (\mathbf{x},\sigma)$ is precisely the set of $\sigma^0 \in D_H$ which can be decomposed as $\sigma^0 = a\sigma$ for $a\in H' \cap D_L$. Thus, the Markov projection property then implies that
\begin{align*}
\mathrm{Prob}&((\mathbf{x},\sigma) \text{ at time } t)  = \sum_{a \in H' \cap D_L} \mathrm{Prob}((\mathbf{x},a\sigma) \text{ at time } t)  \\ 
&= \sum_{a \in H' \cap D_L}\frac{q^{\inv(a\sigma)}}{[N]_q^!}  \left( \frac{1}{2\pi i}\right)^N \int_{\mathcal{C}_R} \cdots \int_{\mathcal{C}_R} \sum_{\tau \in S_N} A_{\tau} \prod_{j=1}^N \left[ \prod_{k=y_{\tau(j)}}^{x_j} \left( \frac{b_k}{b_k - w_{\tau(j)}}\right) e^{-w_jt}  \right] dw_1 \cdots dw_N\\
&= q^{\inv(\sigma)}\sum_{a \in H' \cap D_L} \frac{q^{\inv(a)}}{[N]_q^!}  \left( \frac{1}{2\pi i}\right)^N \int_{\mathcal{C}_R} \cdots \int_{\mathcal{C}_R} \sum_{\tau \in S_N} A_{\tau} \prod_{j=1}^N \left[ \prod_{k=y_{\tau(j)}}^{x_j} \left( \frac{b_k}{b_k - w_{\tau(j)}}\right) e^{-w_jt}  \right] dw_1 \cdots dw_N
\end{align*}
Applying Proposition \ref{QBIN} to the sum over $H' \cap D_L$ shows the more general case. 

It thus remains only to prove Theorem \ref{Main} when $\mathbf{N}=(1,\ldots,1)$.
\subsubsection{Master Equation when $\mathbf{N}=(1,\ldots,1)$}

The dynamics can also be defined by stating the master equation. Recall from the definition of the generator, that for a particle configuration $\xi$, the particle configuration obtained by moving a species $j$ particle from $y$ to $y+1$ is denoted $\xi(j,y)$. Let $\xi^{-(j,y)}$ denote the particle configuration obtained by moving a species $j$ particle from $y$ to $y-1$. The master equation is then (where the ``at time'' is omitted)
$$
\frac{d}{dt} \mathrm{Prob}( (\mathbf{x},\sigma);t) =  -\mathrm{wt}(\mathbf{x}) \mathrm{Prob}( (\mathbf{x},\sigma);t)  +  \sum L_{mq\text{TAZRP}}( (\mbfx,\sigma)^{-(j,y)}, (\mathbf{x},\sigma)) \cdot \mathrm{Prob}( (\mathbf{x},\sigma)^{-(j,y)};t) ,
$$
where the sum is taken over all $(j,y)$ such that the corresponding particle configuration is still well--defined. The quantity $\mathrm{wt}(\mathbf{x})$ is the inverse of the expected amount of time the particle configuration spends at $(\mathbf{x},\sigma)$, and is equal to 
\begin{equation}\label{WaitTime}
\mathrm{wt}(\mathbf{x}) = \sum_{s=1}^r b_{x_{m_1 + \ldots + m_s}}[m_s]_q .
\end{equation}

\begin{figure}
\begin{center}
\begin{tikzpicture}[scale=0.9, every text node part/.style={align=center}]
\usetikzlibrary{arrows}
\usetikzlibrary{shapes}
\usetikzlibrary{shapes.multipart}
\tikzstyle{arrow}=[->,>=stealth,thick,rounded corners=4pt]

\draw (-0.33,0)--(3,0);
\draw (-0.33,0)--(-0.33,0.66);
\draw (0.33,0)--(0.33,0.66);
\draw (1,0)--(1,0.66);
\draw (1.66,0)--(1.66,0.66);
\draw (2.33,0)--(2.33,0.66);
\draw (3,0)--(3,0.66);
\draw (0.66,0.33) circle (8pt);
\draw (0,0.33) circle (8pt);
\draw (0.66,1) circle (8pt);
\draw (0.66,1.66) circle (8pt);
\draw (2,1) circle (8pt);
\draw (2,0.33) circle (8pt);
\draw (2.66,0.33) circle (8pt);

\node at (0,0.33) {$2$};
\node at (0.66,0.33) {$1$};
\node at (0.66,1)  {$6$};
\node at (0.66,1.66)  {$7$};
\node at (2,1) {$5$};
\node at (2,0.33) {$3$};
\node at (2.66,0.33) {$4$};

\draw[arrow] (3.33,0.33)--(5,0.33) node[midway,above,fill=white]{$i=4$};

\draw[arrow] (3.33,0)--(5,-1.66) node[midway,above,fill=white]{$i=5$};

\draw (5.66,0)--(9,0);
\draw (5.66,0)--(5.66,0.66);
\draw (6.33,0)--(6.33,0.66);
\draw (7,0)--(7,0.66);
\draw (7.66,0)--(7.66,0.66);
\draw (8.33,0)--(8.33,0.66);
\draw (9,0)--(9,0.66);
\draw (6.66,0.33) circle (8pt);
\draw (6,0.33) circle (8pt);
\draw (6.66,1) circle (8pt);
\draw (6,1) circle (8pt);
\draw (8,1) circle (8pt);
\draw (8,0.33) circle (8pt);
\draw (8.66,0.33) circle (8pt);

\node at (6,0.33) {$1$};
\node at (6.66,0.33) {$6$};
\node at (6.66,1)  {$7$};
\node at (6,1)  {$2$};
\node at (8,1) {$5$};
\node at (8,0.33) {$3$};
\node at (8.66,0.33) {$4$};

\draw (5.66,-2)--(9,-2);
\draw (5.66,-2)--(5.66,-1.33);
\draw (6.33,-2)--(6.33,-1.33);
\draw (7,-2)--(7,-1.33);
\draw (7.66,-2)--(7.66,-1.33);
\draw (8.33,-2)--(8.33,-1.33);
\draw (9,-2)--(9,-1.33);
\draw (6.66,-1.66) circle (8pt);
\draw (6,-1.66) circle (8pt);
\draw (6.66,-1) circle (8pt);
\draw (6,-1) circle (8pt);
\draw (8,-1) circle (8pt);
\draw (8,-1.66) circle (8pt);
\draw (8.66,-1.66) circle (8pt);

\node at (6,-1.66) {$2$};
\node at (6.66,-1.66) {$1$};
\node at (6.66,-1)  {$7$};
\node at (6,-1)  {$6$};
\node at (8,-1) {$5$};
\node at (8,-1.66) {$3$};
\node at (8.66,-1.66) {$4$};

\end{tikzpicture}

\end{center}
\caption{The particule configuration on the left corresponds to $(\mathbf{x},\sigma)$ where $ \mathbf{x}=(5,4,4,2,2,2,1)$ and $\sigma=4351672$.  }
\label{Ex2}
\end{figure}
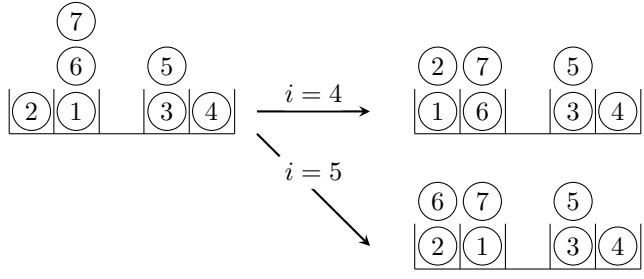

When $\mathbf{N}=(1,\ldots,1)$, the master equation and generator have a nice expression in terms of permutations $\sigma \in S(N)$. Given $\mathbf{x} \in \mathcal{W}_N$ and $1 \leq k \leq N$, the sequence $(x_1,\ldots,x_{k-1},x_k-1,x_{k+1},\ldots,x_N)$ is generally not in $\mathcal{W}_N$. However, rearranging the terms in the sequence produces a unique element of $\mathcal{W}_N$, which we denote $\mathbf{x}^{-k}$. Consider
$$
\overline{\sigma} = \sigma  s_{k+d} s_{k+d-1}\cdots s_k,
$$
where $d$ is the largest integer such that $s_k,\ldots,s_{k+d}$ are all in $H$. Let $\hat{H} = S(\mathbf{m}(\mathbf{x}^{-k}))$. In general, $\overline{\sigma}$ need not be an element of $D_{H',\hat{H}}=D_{\hat{H}}$. Let $\hat{\sigma}$ be the unique element of $D_{\hat{H}}$ such that its coset $H' \hat{\sigma}\hat{H} = \hat{\sigma}\hat{H}$ contains $\overline{\sigma}$. Then $\overline{\sigma}= \hat{\sigma}b$ for some $b \in \hat{H}$, and $l(\overline{\sigma}) = l(\hat{\sigma}) + l(b)$. The generator has the expression
$$
L_{mq\text{TAZRP}}((\mathbf{x}^{-k},\hat{\sigma}), (\mathbf{x},\sigma)) = q^{l(b)} = q^{l(\overline{\sigma})-l(\hat{\sigma})} 
$$
The particle configuration $(\mathbf{x}^{-k},\hat{\sigma})$ should be understood as being obtained from $\mathbf{x}^{},\hat{\sigma})$ by moving the $k$th particle one step to the left. See Figure \ref{Ex} for an example.

\begin{lemma}
Let $\tau = \sigma s_j$ and suppose that $l(\tau) = l(\sigma)-1$. Then
$$
q^{l(\hat{\tau})}L_{mq\text{TAZRP}}((\mathbf{x}^{-k},\hat{\tau}), (\mathbf{x},\tau)) = q^{-1} q^{l(\hat{\sigma})}L_{mq\text{TAZRP}}((\mathbf{x}^{-k},\hat{\sigma}), (\mathbf{x},\sigma)) 
$$
\end{lemma}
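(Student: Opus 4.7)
The plan is to strip away the definitions until the claim becomes a pure statement about lengths in the symmetric group, and then prove that via a length-additivity property of $\overline{\mu}=\mu w$ for $\mu\in D_H$.

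First, I would unpack the generator formula $L_{mq\text{TAZRP}}((\mathbf{x}^{-k},\hat{\mu}),(\mathbf{x},\mu)) = q^{l(\overline{\mu})-l(\hat{\mu})}$ for $\mu \in \{\sigma,\tau\}$: the factor $q^{l(\hat{\mu})}$ on each side of the identity cancels the $-l(\hat{\mu})$ in the exponent of $L$, and the claim reduces to the purely Coxeter-theoretic statement $l(\overline{\tau}) = l(\overline{\sigma}) - 1$. Next, I would observe that Lemma \ref{PREV}, applied with $J=H'=\{e\}$ and $K=H$, shows that the hypothesis $l(\tau)=l(\sigma)-1$ together with $\sigma\in D_{H',H}$ forces $\tau=\sigma s_j\in D_{H',H}=D_H$, so both $\sigma$ and $\tau$ lie in $D_H$.

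The central step is the length-additivity claim: for every $\mu\in D_H$,
\[
l(\overline{\mu}) \;=\; l(\mu) + l(w) \;=\; l(\mu) + d + 1,
\qquad\text{where } w=s_{k+d}s_{k+d-1}\cdots s_k.
\]
I would prove this inductively along the partial products $\mu_m:=\mu\cdot s_{k+d}s_{k+d-1}\cdots s_{k+d-m+1}$, showing $l(\mu_{m+1})=l(\mu_m)+1$ at each step. A direct computation in one-line notation shows that $\mu_m$ differs from $\mu$ only by a cyclic shift of the entries at positions $k+d-m+1,\ldots,k+d+1$, so in particular $\mu_m(k+d-m)=\mu(k+d-m)$ and, for $m\ge 1$, $\mu_m(k+d-m+1)=\mu(k+d+1)$. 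Consequently, $l(\mu_m s_{k+d-m})=l(\mu_m)+1$ is equivalent to $\mu(k+d-m)<\mu(k+d+1)$, which is guaranteed by the ascending chain $\mu(k)<\mu(k+1)<\cdots<\mu(k+d+1)$ coming from $\mu\in D_H$ and $s_k,\ldots,s_{k+d}\in H$ via Proposition \ref{2.3.3}.

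Finally, applying the length-additivity to both $\sigma$ and $\tau$ yields $l(\overline{\sigma})=l(\sigma)+d+1$ and $l(\overline{\tau})=l(\tau)+d+1$, and subtracting, together with the hypothesis $l(\tau)=l(\sigma)-1$, gives $l(\overline{\tau})-l(\overline{\sigma})=-1$, as required. The main obstacle is the bookkeeping in the induction: one has to verify the explicit one-line-notation description of $\mu_m$ carefully and then recognize that the ascending-chain hypothesis is exactly what the inductive step needs. Everything surrounding this---the cancellation of the $q^{l(\hat\mu)}$ factors and the invocation of Lemma \ref{PREV}---is routine.
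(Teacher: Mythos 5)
Your proposal is correct and follows essentially the same route as the paper: reduce the identity to $l(\overline{\tau})=l(\overline{\sigma})-1$, invoke Lemma \ref{PREV} to place both $\sigma$ and $\tau=\sigma s_j$ in $D_{H',H}=D_H$, and conclude from the length-additivity of multiplying an element of $D_H$ by $s_{k+d}\cdots s_k\in H$. The only difference is that you reprove that length-additivity by hand with a one-line-notation induction, whereas the paper obtains it directly from Proposition \ref{2.3.3}(b), since a distinguished coset representative times an element of $H$ has additive length.
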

\begin{proof}
It suffices to show that
$$
l(\overline{\tau}) = l(\overline{\sigma}) - 1.
$$
By definition, $\overline{\tau}= \sigma s_{j} s_{k+d} \cdots s_k$. By Lemma \ref{PREV}, both $\sigma$ and $\sigma s_j$ are in $D_{H',H}=D_H$, so 
$$
l(\overline{\tau}) = l(\sigma s_j) + d+1, \quad l(\overline{\sigma}) = l(\sigma) + d + 1.
$$
Since $l(\sigma _j) = l(\sigma)-1$ by assumption, this proves the lemma.
\end{proof}

We now show that
$$
 q^{ - \inv(\sigma)}   \cdot \mathrm{Prob}((\mathbf{x},\sigma);t) =  q^{-\inv(\tau)}  \cdot \mathrm{Prob}((\mathbf{x},\tau) ;t).
$$
Proceed by induction on the value of $x_1+ \ldots + x_N$. The base case is when $\mathbf{x}=\mathbf{y}$, in which case the result follows because the initial condition is $q$--exchangeable.

\begin{figure}
\begin{center}
\begin{tikzpicture}[scale=0.9, every text node part/.style={align=center}]
\usetikzlibrary{arrows}
\usetikzlibrary{shapes}
\usetikzlibrary{shapes.multipart}

\draw (-0.33,0)--(3,0);
\draw (-0.33,0)--(-0.33,0.66);
\draw (0.33,0)--(0.33,0.66);
\draw (1,0)--(1,0.66);
\draw (1.66,0)--(1.66,0.66);
\draw (2.33,0)--(2.33,0.66);
\draw (3,0)--(3,0.66);
\draw (0.66,0.33) circle (8pt);
\draw (0,0.33) circle (8pt);
\draw (0,1) circle (8pt);
\draw (0.66,1) circle (8pt);
\draw (2,1) circle (8pt);
\draw (2,0.33) circle (8pt);
\draw (2.66,0.33) circle (8pt);

\node at (0,0.33) {$3$};
\node at (0,1) {$7$};
\node at (0.66,0.33) {$4$};
\node at (0.66,1)  {$6$};
\node at (2,1) {$5$};
\node at (2,0.33) {$1$};
\node at (2.66,0.33) {$2$};

\draw (5.66,0)--(9,0);
\draw (5.66,0)--(5.66,0.66);
\draw (6.33,0)--(6.33,0.66);
\draw (7,0)--(7,0.66);
\draw (7.66,0)--(7.66,0.66);
\draw (8.33,0)--(8.33,0.66);
\draw (9,0)--(9,0.66);
\draw (6.66,0.33) circle (8pt);
\draw (6,0.33) circle (8pt);
\draw (6,1) circle (8pt);
\draw (6,1.66) circle (8pt);
\draw (8,1) circle (8pt);
\draw (8,0.33) circle (8pt);
\draw (8.66,0.33) circle (8pt);

\node at (6,0.33) {$3$};
\node at (6,1) {$4$};
\node at (6,1.66) {$7$};
\node at (6.66,0.33) {$6$};
\node at (8,1) {$5$};
\node at (8,0.33) {$1$};
\node at (8.66,0.33) {$2$};

\end{tikzpicture}
\end{center}
\caption{The particle configuration on the left corresponds to $\sigma=2164357$. Set $k=4$. Then $\overline{\sigma} := \sigma \cdot s_4 = 2165347$, which describes the same particle configuration on the left. (One should imagine that the {\textcircled{\small 4}} and the {\textcircled{\small 6}} have switched places).  The particle configuration on the right corresponds to $\hat{\sigma} = 2156347$, and note that $\overline{\sigma} = \hat{\sigma} \cdot s_5$. The jump rate can be determined entirely from $\overline{\sigma}$ and $\hat{\sigma}$.}
\label{Ex}
\end{figure}
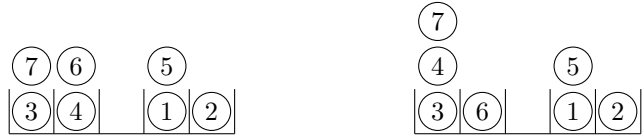

It suffices to consider the case when $\tau=\sigma s_j$ and $l(\tau)=l(\sigma)-1$. In order to apply the induction hypothesis, we use the lemma to rewrite the master equation as 
\begin{align*}
\frac{d}{dt} \mathrm{Prob}( (\mathbf{x},\sigma);t) &=  -\mathrm{wt}(\mathbf{x}) \mathrm{Prob}( (\mathbf{x},\sigma);t)  +  \sum_{k=1}^N L_{mq\text{TAZRP}}(  (\mathbf{x}^{-k},\hat{\sigma}),(\mbfx,\sigma)) \cdot \mathrm{Prob}( (\mathbf{x}^{-k},\hat{\sigma});t) \\
&=-\mathrm{wt}(\mathbf{x}) \mathrm{Prob}( (\mathbf{x},\sigma);t)  +  q \sum_{k=1}^N q^{l(\hat{\tau})-l(\hat{\sigma})}L_{mq\text{TAZRP}}(  (\mathbf{x}^{-k},\hat{\tau}),(\mbfx,\tau)) \cdot \mathrm{Prob}( (\mathbf{x}^{-k},\hat{\sigma});t) .
\end{align*}
And now applying the induction hypothesis shows that
$$
\frac{d}{dt} \mathrm{Prob}( (\mathbf{x},\sigma);t)  = -\mathrm{wt}(\mathbf{x}) \mathrm{Prob}( (\mathbf{x},\sigma);t)  +  q \sum_{k=1}^N L_{mq\text{TAZRP}}(  (\mathbf{x}^{-k},\hat{\tau}),(\mbfx,\tau)) \cdot \mathrm{Prob}( (\mathbf{x}^{-k},\hat{\tau});t) 
$$
At the same time, the master equation tells us directly that
$$
\frac{d}{dt} \left[ q \mathrm{Prob}( (\mathbf{x},\tau);t)\right]  = -\mathrm{wt}(\mathbf{x}) \cdot \left[ q \mathrm{Prob}( (\mathbf{x},\tau);t)  \right] +  q \sum_{k=1}^N L_{mq\text{TAZRP}}(  (\mathbf{x}^{-k},\hat{\tau}),(\mbfx,\tau)) \cdot \mathrm{Prob}( (\mathbf{x}^{-k},\hat{\tau});t) 
$$
Therefore $ \mathrm{Prob}( (\mathbf{x},\sigma);t) $ and $q \mathrm{Prob}( (\mathbf{x},\tau);t)$ satisfy the same differential equation, and because the initial condition is $q$--exchangeable, they must have the same value at $t=0$. Therefore they are equal for all values of $t$, completing the inductive step. 

By the first part of Theorem \ref{QBIN} and the Markov projection property, this reduces the theorem for $\mathbf{N}=(1,\ldots,1)$ to the single--species case, which is already known, thus completing the proof.

\subsubsection{Proof of Theorem \ref{lamelol}}
To show that this defines a stationary measure, it suffices to apply the differential operator which defines the master equation and show the result is $0$. 
 
Suppose $\mathrm{m}(\mathbf{x})=(m_1,\ldots,m_r)$. Suppose that the lattice site $x_{m_s}$, there are $m_s^{(j)}$ particles of species $j$. Then the stationary measures are given by
$$
\frac{q^{\inv(\sigma)}}{[m_1^{(1)}]_q^! \cdots [m_r^{(1)}]_q^! [m_1^{(2)}]_q^! \cdots [m_r^{(2)}]_q^! \cdots [m_1^{(n)}]_q^! \cdots [m_r^{(n)}]_q^! }
$$
Therefore, using \eqref{id} to expand $\mathbf{wt}(\mbfx)$, and using the same notation for the master equation as in the previous section, we want to show
\begin{multline*}
\sum_{k=1}^N q^{l(\hat{\sigma}) } L_{mq\text{TAZRP}}( (\mbfx^{-k},\hat{\sigma}),(\mbfx,\sigma)) \frac{[m_s^{(j)}]_q }{[m_1^{(1)}]_q^! \cdots [m_r^{(1)}]_q^! [m_1^{(2)}]_q^! \cdots [m_r^{(2)}]_q^! \cdots [m_1^{(n)}]_q^! \cdots [m_r^{(n)}]_q^! } \\
= q^{\inv(\sigma)}\frac{ \sum_{s=1}^r \sum_{j=1}^n q^{m_{s}^{(j+1)} + \cdots + m_{s}^{(n)}}[m_s^{(j)}]_q  }{[m_1^{(1)}]_q^! \cdots [m_r^{(1)}]_q^! [m_1^{(2)}]_q^! \cdots [m_r^{(2)}]_q^! \cdots [m_1^{(n)}]_q^! \cdots [m_r^{(n)}]_q^! } .
\end{multline*}
On the left--hand--side, the $(j,s)$ is related to $k$ in that the $k$th particle is of species $j$ located at lattice site $x_{m_1 + \ldots + m_s}$. The $[m_s^{(j)}]_q$ occurs in the numerator because the corresponding term in the denominator is replaced with $[m_s^{(j)}-1]_q^!$. By inserting the expression for the generator, it suffices to show that
$$
\inv(\hat{\sigma}) + m_{s+1}^{(1)} + \ldots + m_{s+1}^{(j-1)} = \inv(\sigma) + m_s^{(j+1)} + \ldots + m_s^{(n)}.
$$
However, this is straightforward from the definition of the dynamics.

\section{Multi-species ASEP}\label{3}

Let us begin by recalling a few results of the single--species ASEP.
\subsection{Single--species ASEP}
Let $\mathfrak{Z}\subseteq \mathbb{Z}$ be a finite or infinite subset of the set of integers. Given $X\in 2^{\mathfrak{Z}}$ (the subsets of $\mathfrak{Z}$) and $x\in X$, define
\begin{align*}
X^{x\rightarrow x+1} &= (X \backslash \{x\} ) \cup \{x+1\} , \text{ if } x+1 \in \mathfrak{Z}\backslash X, \\
X^{x\rightarrow x-1} &= (X \backslash \{x\} ) \cup \{x-1\} , \text{ if } x-1 \in \mathfrak{Z} \backslash X. \\
\end{align*}
In every other case, formally set $X^{x\rightarrow x\pm 1}$ to be undefined.

Define ASEP on $\mathfrak{Z}$ to be the continuous--time Markov process on state space $X\in 2^{\mathfrak{Z}}$ with generator
$$
L_{\text{ASEP}[\mathfrak{Z}]}(X,X')
= 
\begin{cases}
1, \text{ if } & X'=X^{x\rightarrow x+1} \text{ for some } x\in X, \\
q, \text{ if } & X'=X^{x \rightarrow x-1} \text{ for some } x\in X, \\
0, \text{ if } & X' \neq X, X^{x\rightarrow x\pm 1} \text{ for all } x\in X,
\end{cases}
$$
and let $L_{\text{ASEP}[\mathfrak{Z}]}(X,X)$ be the unique number such that every row of $L$ sums to $0$. The set $X$ describes the locations where the sites are occupied by particles.

When $\mathfrak{Z}=\mathbb{Z}$, there is an explicit formula for the transition probabilities of ASEP.
Set 
$$
S_{\alpha,\beta}= - \frac{1+q\xi_{\alpha}\xi_{\beta} - \xi_{\alpha}}{1+q\xi_{\alpha}\xi_{\beta} - \xi_{\beta}},
$$
where $\epsilon(\xi)=\xi^{-1} + q\xi -1$. Then for ASEP[$\mathbb{Z}$] with jumps to the right of rate $1$ and jumps to the left of rate $q$, the transition probabilities were found in \cite{TW}\footnote{In \cite{TW}, the formula is written where the probability of a right jump is $p$ and the probability of a left jump is $q$, where $p+q=1$. To match the notation, one simply replaces the $p$ with $1$ and rescales time by a factor of $1+q$.}:
$$
P^{\text{ASEP}}_Y(X;t) = \left( \frac{1}{2\pi i}\right)^N\sum_{\sigma \in S_N} \int_{\mathcal{C}_r} \cdots \int_{\mathcal{C}_r} A_{\sigma} \prod_{i} \xi_{\sigma(i)}^{x_i - y_{\sigma(i)}-1} e^{(1+q)\sum_i \epsilon(\xi_i)t} d\xi_1 \cdots d\xi_N,
$$
where the contour integrals are very small.

\subsection{Definition of multi--species ASEP}
Let us define the generator for the multi--species ASEP (mASEP) on $\mathfrak{Z}\subseteq \mathbb{Z}$. Additionally fix $\mathbf{N}=(N_1,\ldots,N_n)$. In this case, the state space will consist of pairs $(\mathbf{x},\sigma)$, where 
$$
\mathbf{x} \in \mathcal{W}_N^+ = \{(x_1 > x_2 > \ldots > x_N): x_i \in \mathfrak{Z}\} \subset \mathfrak{Z}^N
$$
and $\sigma \in D_{H'}^{-1}$, where $H'=S(\mathbf{N})$. By a slight abuse of notation, $\mathbf{x}$ will be equivalently considered as a subset of $\mathfrak{Z}$.
The generator is then defined by having off--diagonal entries
$$
L_{\text{mASEP}[\mathfrak{Z}]} (( \mathbf{x},\sigma),(\mathbf{x}',\sigma')) 
= 
\begin{cases}
1, \text{ if } \mathbf{x}' = \mathbf{x}^{x \rightarrow x+1} \text{ for some } x\in X \text{ and } \sigma=\sigma',\\
q, \text{ if } \mathbf{x}' = \mathbf{x}^{x \rightarrow x-1} \text{ for some } x\in X \text{ and } \sigma=\sigma',\\
1, \text{ if } \mathbf{x}' = \mathbf{x} \text{ and } \sigma' = \sigma \circ (r \ r+1) \text{ for some } r \text{ and } \inv(\sigma') = \inv(\sigma)-1,\\ 
q, \text{ if } \mathbf{x}' = \mathbf{x} \text{ and } \sigma' = \sigma \circ (r \ r+1) \text{ for some } r \text{ and } \inv(\sigma') = \inv(\sigma)+1,\\
0, \text{else}. 
\end{cases}
$$
The diagonal entries $L_{\text{mASEP}[\mathfrak{Z}]} (( \mathbf{x},\sigma),(\mathbf{x},\sigma)) $ are defined so that the rows sum to $0$. The set $\mathbf{x}$ indicates the locations where the sites are occupied by particles and $\sigma$ indicates the ordering of the particles. The multi--species ASEP also satisfies the property that the projection to the first $k$ species is again a multi--species ASEP.

The main theorem is:
\begin{theorem}\label{uhm...} Given $q$--exchangeable initial conditions supported at $\mathbf{y}$, the multi--species ASEP on $\mathbb{Z}$ satisfies
$$
\mathrm{Prob}((X,\sigma);t) = \left( \frac{1}{2\pi i}\right)^N \frac{q^{\inv(\sigma)} }{[N_1]_q^! \cdots [N_n]_q^!} \sum_{\sigma \in S_N} \int_{\mathcal{C}_r} \cdots \int_{\mathcal{C}_r} A_{\sigma} \prod_{i} \xi_{\sigma(i)}^{x_i - y_{\sigma(i)}-1} e^{(1+q)\sum_i \epsilon(\xi_i)t} d\xi_1 \cdots d\xi_N,
$$
\end{theorem}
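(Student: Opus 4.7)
The plan is to exploit two structural features: (i) the multi--species ASEP dynamics preserves the $q$--exchangeable form of the distribution along $\sigma$, and (ii) the Markov projection that collapses all $n$ species into one reduces the problem to the single--species Tracy--Widom formula. Because each site of ASEP holds at most one particle, the double--coset combinatorics of Section \ref{1} simplify considerably: there is no Young subgroup $H$ acting on the right, and only the set $D_{H'}^{-1}$ of minimal right--coset representatives indexes the species ordering.

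The first step is to verify that the ansatz $\mathrm{Prob}((X,\sigma);t) = q^{\inv(\sigma)} \phi(X;t)$ is preserved by the master equation. The position--move transitions in the generator do not touch $\sigma$, so they act only on $\phi$ and contribute exactly the single--species ASEP dynamics. For a species--swap transition $\sigma \leftrightarrow \sigma \circ (r,r+1)$, the rates are $1$ and $q$ in the inversion--decreasing and inversion--increasing directions, respectively. The detailed--balance identity $q \cdot q^{\inv(\sigma)} = 1 \cdot q^{\inv(\sigma \circ (r,r+1))}$ when $\inv$ increases by one forces the inflow and outflow contributions of each swap to cancel pairwise in the master equation. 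Combined with Lemma \ref{PREV} to confirm that such swaps keep us inside $D_{H'}^{-1}$, this shows the ansatz is propagated in time, and $\phi$ obeys the single--species ASEP master equation.

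The second step is to apply the Markov projection $\pi$ collapsing all species. Summing over $\sigma \in D_{H'}^{-1}$ yields the single--species ASEP transition probability $P^{\mathrm{ASEP}}_Y(X;t)$, which by Tracy--Widom equals the stated contour integral. Proposition \ref{QBIN} then gives $\sum_{\sigma \in D_{H'}^{-1}} q^{\inv(\sigma)} = [N]_q^!/\prod_j [N_j]_q^!$, which determines $\phi$ in terms of $P^{\mathrm{ASEP}}_Y(X;t)$ and yields the claimed formula. The initial condition at $t=0$ is matched automatically because $P^{\mathrm{ASEP}}_Y(Y;0) = 1$ and the $q$--exchangeable measure supported at $\mathbf{y}$ is, by definition, of the form $q^{\inv(\sigma)}/Z$ with exactly this normalization.

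The main obstacle I expect is the bookkeeping in Step 1: carefully verifying that the ansatz is preserved by \emph{all} allowed transitions, and in particular handling the boundary cases where a naive swap $\sigma \to \sigma \circ (r,r+1)$ would try to leave $D_{H'}^{-1}$ (these correspond to attempting to exchange two same--species particles, which the dynamics effectively forbids). Once this is cleaned up, the remainder of the argument is short and invokes only previously established results.
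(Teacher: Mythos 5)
Your argument is correct and lands in the same place as the paper, but the packaging differs enough to be worth comparing. The paper does not run the master equation by hand: it first proves an abstract factorization statement (Proposition \ref{Proposition}) for a joint Markov process $(X(t),S(t))$ whose first marginal is Markov, showing that if a measure $\nu$ on the second coordinate is stationary for every induced process $S^{(xx')}(t)$ then $(\mu\otimes\nu)L_{XS}=(\mu L_X)\otimes\nu$, so the product form is propagated in time. It then verifies the hypothesis by identifying $S^{(\mathbf{x}\mathbf{x})}(t)$ with an mASEP on the union of occupied intervals (Lemma \ref{aaa}) and proving stationarity of $\nu_q$ for that process via the coset decomposition $S(N)=\bigsqcup_{\tau}\tau\,(S(L_1)\times\cdots\times S(L_k))$ (Lemma \ref{bbb}), and finally applies Proposition \ref{Proposition}(d) for $\mathbf{N}=(1,\ldots,1)$ before projecting to general $\mathbf{N}$. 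Your detailed-balance cancellation $q\cdot q^{\inv(\sigma)}=1\cdot q^{\inv(\sigma\circ(r\,r+1))}$ is exactly the content of Lemma \ref{bbb} unwound, so both proofs rest on the same mechanism; yours is more elementary and self-contained, while the paper's buys a reusable general statement that converts stationarity of the species marginal directly into the transition-probability identity without an explicit appeal to uniqueness of solutions of the (infinite-dimensional) master equation. The boundary issue you flag --- swaps of same-species neighbours that would leave $D_{H'}^{-1}$ --- is genuinely the delicate point of your direct route; the paper sidesteps it by working in all of $S(N)$ first and only afterwards summing over the coset $H'\sigma$. One caution on the constant: carrying out your Step 2 literally, $\sum_{\sigma\in D_{H'}^{-1}}q^{\inv(\sigma)}=[N]_q^!/\prod_j[N_j]_q^!$ yields the prefactor $q^{\inv(\sigma)}\prod_j[N_j]_q^!/[N]_q^!$, which is also what the final display of the paper's own proof produces and is the correctly normalized answer; it differs from the prefactor $q^{\inv(\sigma)}/\prod_j[N_j]_q^!$ printed in the theorem statement, so you should not adjust your computation to match the printed constant.
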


\subsection{Proof of Theorem \ref{uhm...}}

\subsubsection{Generalities}
Suppose that $(X(t),S(t))$ is a Markov process on the state space $\mathcal{X} \times \mathcal{S}$, which we assume to be countably infinite or finite. In general $S(t)$ need not be Markov, but assume that $X(t)$ is a Markov process on $\mathcal{X}$. Let $L_X$ be the generator of $X(t)$. The generator $L_{XS}$ of $(X(t),S(t))$ can then be written as
$$
L_{XS}((x,s),(x',s')) = L_X(x,x')M_S^{(xx')}(s,s'),
$$
where $M_S^{(xx')}$ is some $\mathcal{S} \times \mathcal{S}$ matrix which depends on $x,x'\in \mathcal{X}$. More specifically, define
$$
M_S^{(xx')}(s,s')
= 
\begin{cases}
\displaystyle\frac{L_{XS}((x,s),(x',s'))}{L_X(x,x')}, \quad & L_X(x,x') \neq 0, \\
1_{\{s=s'\}}, \quad & L_X(x,x') = 0
\end{cases}
$$

\begin{proposition}\label{Proposition}
(a) If $L_X(x,x')\neq 0$, then the matrix elements of $M_S^{(xx')}$ satisfy
$$
M_S^{(xx')}(s,s') = 
\begin{cases}
\displaystyle\lim_{\epsilon\rightarrow 0^+} \mathbb{P}( S(t+\epsilon) = s' \ \vert X(t+\epsilon)=x' \text{ and }  (X(t),S(t))=(x,s)), \quad x\neq x' ,\\
\displaystyle\lim_{\epsilon\rightarrow 0^+} \epsilon^{-1}\frac{\mathbb{P}( S(t+\epsilon) = s' \ \vert X(t+\epsilon)=x \text{ and }  (X(t),S(t))=(x,s))}{L_X(x,x)}, \quad x= x', s \neq s',\\
1+\displaystyle\lim_{\epsilon\rightarrow 0^+} \epsilon^{-1}\frac{\mathbb{P}( S(t+\epsilon) = s \ \vert X(t+\epsilon)=x \text{ and }  (X(t),S(t))=(x,s))-1}{L_X(x,x)}, \quad x= x', s = s'
\end{cases}
$$
In particular, $(-1)^{1_{\{x = x'\}}} \left( M_S^{xx'}-\mathrm{Id} \right)$ is the generator of a continuous--time Markov process $S^{(xx')}(t)$ on $\mathcal{S}$ for all $x,x'\in \mathcal{X}$.

(b) Let $\nu$ be a probability measure on $\mathcal{S}$. Then $\nu M_S^{(xx')}=\nu$ if and only if $\nu$ is a stationary measure of the Markov process $S^{(xx')}(t)$. 

(c) Suppose that $\nu$ is a stationary measure of the Markov process $S^{(xx')}(t)$ for every $(x,x') \in \mathcal{X} \times \mathcal{X}$. Then for any probability measure $\mu$ on $\mathcal{X}$,
$$
(\mu \otimes \nu)L_{XS} = (\mu L_X) \otimes \nu.
$$

(d) Suppose that $\nu$ is a stationary measure of the Markov process $S^{(xx')}(t)$ for every $(x,x') \in \mathcal{X} \times \mathcal{X}$. Then for every $(x',s') \in \mathcal{X} \times \mathcal{S}$ and any $x\in \mathcal{X}$,
$$
\sum_{s\in \mathcal{S}} \nu(s) \mathbb{P}\left( (X(t),S(t)) = (x',s')\ \vert \ (X(0),S(0))=(x,s) \right)   = \nu(s') \mathbb{P}( X(t)=x' \vert X(0) = x).
$$
\end{proposition}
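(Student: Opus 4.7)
The plan is to establish (a) from a Taylor expansion of conditional probabilities, deduce (b) as a tautology, derive (c) by a short computation that invokes (b), and finally prove (d) by a first-order ODE argument built on (c). The main subtlety is the two-case bookkeeping in (a); once that is correctly handled, (b)--(d) reduce to routine algebraic manipulations.

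For part (a), I would start from the standard characterization of a continuous-time Markov generator: for $(x,s) \neq (x',s')$,
$$
L_{XS}((x,s),(x',s')) = \lim_{\epsilon \to 0^+} \epsilon^{-1}\, \mathbb{P}\bigl((X,S)(t+\epsilon)=(x',s') \,\vert\, (X,S)(t)=(x,s)\bigr),
$$
with the analogous formula on the diagonal. Using the defining factorization $L_{XS}((x,s),(x',s')) = L_X(x,x') M_S^{(xx')}(s,s')$ together with the elementary decomposition
$$
\mathbb{P}((X,S)(t+\epsilon)=(x',s') \mid \cdot) = \mathbb{P}(S(t+\epsilon)=s' \mid X(t+\epsilon)=x',\cdot)\, \mathbb{P}(X(t+\epsilon)=x' \mid \cdot),
$$
one isolates the conditional probability of the $S$-coordinate and expands to first order in $\epsilon$. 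The three cases in the statement correspond to $x\neq x'$, to $x=x'$ with $s\neq s'$, and to $x=x'$ with $s=s'$ respectively; in the last case the extra ``$-1$'' appears because the unconditional probability tends to $1$ rather than $0$. For the generator claim, the key identity is $\sum_{s'} L_{XS}((x,s),(x',s')) = L_X(x,x')$, which must hold since $X(t)$ is assumed Markov with generator $L_X$; this forces the rows of $M_S^{(xx')}$ to sum to $1$. Off-diagonal entries of $M_S^{(xx')}$ have the same sign as $L_X(x,x')$, so they are nonnegative when $x\neq x'$ and nonpositive when $x=x'$; the factor $(-1)^{1_{\{x=x'\}}}$ is precisely what is needed to produce a valid generator in both cases.

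Part (b) is then immediate: $\nu\bigl( (-1)^{1_{\{x=x'\}}}(M_S^{(xx')} - \mathrm{Id})\bigr) = 0$ iff $\nu M_S^{(xx')} = \nu$. For part (c), I would compute
\begin{align*}
((\mu \otimes \nu) L_{XS})(x',s') &= \sum_{x,s} \mu(x)\nu(s) L_X(x,x') M_S^{(xx')}(s,s') \\
&= \sum_x \mu(x) L_X(x,x')\, \nu(s') \;=\; ((\mu L_X) \otimes \nu)(x',s'),
\end{align*}
where in the second equality the inner sum $\sum_s \nu(s) M_S^{(xx')}(s,s')$ equals $\nu(s')$ by (b) applied pairwise.

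Part (d) is the main step. Fix a probability measure $\mu$ on $\mathcal{X}$ and set $\phi(t) = (\mu \otimes \nu) e^{tL_{XS}}$ and $\psi(t) = (\mu e^{tL_X}) \otimes \nu$. The two functions agree at $t=0$, and $\phi'(t) = \phi(t) L_{XS}$ by definition of the semigroup. Applying (c) to the time-evolved measure $\mu e^{tL_X}$ gives $\psi'(t) = (\mu e^{tL_X} L_X) \otimes \nu = ((\mu e^{tL_X}) \otimes \nu) L_{XS} = \psi(t) L_{XS}$. Uniqueness of solutions to the linear ODE $f' = f L_{XS}$ with prescribed initial data then gives $\phi \equiv \psi$, and specializing $\mu = \delta_x$ and evaluating at $(x',s')$ yields the stated identity. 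In the countable-state setting one should also verify that $e^{tL_{XS}}$ and $e^{tL_X}$ are genuine strongly-continuous contraction semigroups, but this is standard for processes with uniformly bounded jump rates, as in the mASEP application.
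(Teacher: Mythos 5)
Your proposal is correct and follows essentially the same route as the paper: the conditional-probability factorization and first-order expansion in $\epsilon$ for (a), the row-sum and sign observations for the generator claim, the same one-line computation for (c), and the semigroup identity $(\mu\otimes\nu)e^{tL_{XS}}=(\mu e^{tL_X})\otimes\nu$ for (d). The only differences are cosmetic — you justify the row sums of $M_S^{(xx')}$ via Markovianity of $X$ and spell out the ODE/uniqueness step in (d), both of which the paper leaves implicit.
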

\begin{proof}
(a) 
Recall the well--known identity
$$
\mathbb{P}(A \vert B \cap C) = \frac{\mathbb{P}(A \cap B \vert C)}{\mathbb{P}(B \vert C)},
$$
which holds as long as $\mathbb{P}(B\cap C)\neq 0$.
Letting $A,B,C$ be the events 
$$
A= \{ S(t+\epsilon)=s'\}, \quad B = \{ X(t+\epsilon)=x'\}, \quad C = \{(X(t),S(t))=(x,s)\},
$$
we see that $\mathbb{P}(B\cap C)\neq 0$ because $L_X(x,x')\neq 0$. By definition
$$
\mathbb{P}( (X(t+\epsilon),S(t+\epsilon)) = (x',s') \ \vert \  (X(t),S(t))=(x,s)) = 1_{\{x=x',s=s'\}} + L_X(x,x')M_S^{(xx')}(s,s')\epsilon+ O(\epsilon^2).
$$
Thus we have that (using the Markov property of $X(t)$)
$$
\mathbb{P}( S(t+\epsilon) = s' \ \vert X(t+\epsilon)=x' \text{ and }  (X(t),S(t))=(x,s))  = \frac{1_{\{x=x',s=s'\}} + L_X(x,x')M_S^{(xx')}(s,s')\epsilon+ O(\epsilon^2)}{ 1_{\{x=x'\}} + L_X(x,x')\epsilon + O(\epsilon^2)}.
$$
This immediately implies the first two cases. Setting $x=x',s=s'$ results in 
$$
\mathbb{P}( S(t+\epsilon) = s \ \vert X(t+\epsilon)=x \text{ and }  (X(t),S(t))=(x,s))  -1 = \frac{L_X(x,x)(M_S^{(xx)}(s,s)-1)\epsilon+ O(\epsilon^2)}{ 1 + L_X(x,x)\epsilon + O(\epsilon^2)},
$$
which implies the third case.

The last statement follows by noticing that each row sums to $1$ and that the off--diagonal entries are non--negative. 

(b) This follows immediately from the definition of $S^{(xx')}(t)$.

(c) We can compute that
\begin{align*}
\left[ (\mu \otimes \nu)L_{XS} \right](x',s') &= \sum_{(x,s) \in \mathcal{X} \times \mathcal{S}} \mu(x)\nu(s) L_{XS}((x,s),(x's')) \\
&= \sum_{x \in \mathcal{X}} \sum_{s \in \mathcal{S}} \mu(x) L_X(x,x') \nu(s) M_S^{(xx')}(s,s') \\
&= [\mu L_X](x') \nu(s').
\end{align*}

(d) The left--hand--side equals
$$
\left[(\delta_x \otimes \nu) e^{tL_{XS}}\right](x',s').
$$
By part (c), this then equals
$$
\left[ (\delta_x e^{tL_X}) \otimes \nu \right](x',s'),
$$
which equals the right--hand--side.
\end{proof}

\subsubsection{ASEP with distinct species}
The stationary measures of multi--species ASEP on a finite interval with closed boundary conditions were classified in Theorem 3.1 of \cite{BS3}, as well as blocking measures on the infinite lattice. See also the comment at the end of section 2 of \cite{ARI}.

In the special case when $\mathfrak{Z} = \{1,\ldots,N\}$ and $\mathbf{N}=(1,\ldots,1)$, so that every site is always occupied and there is only one particle of each species, the station measure has a simple expression. Namely, let $\nu_q$ be the probability measure on $S(N)$ defined by 
$$
\nu_q(\sigma) = \frac{q^{\inv(\sigma)}}{[N]_q^!}.
$$
Then for $\mathfrak{Z} = \{1,\ldots,N\}$,
$$
\nu_q L_{\text{mASEP}[\mathfrak{Z}] }= 0.
$$
In other words, $\nu_q$ is a stationary measure.

We consider a slightly more general case, where $\mathfrak{Z}\subset \mathbb{Z}$ consists of $k$ disjoint intervals, each of length $L_k$ and separated from each other by at least one lattice site. In this case, ASEP on $\mathfrak{Z}$ evolves as $k$ independent ASEPs, one on each interval. Suppose that $L_1 + \ldots + L_k=N$ and again $\mathbf{N}=(1,\ldots,1)$, so that again every lattice site is always occupied and there is only one particle of each species. We have an embedding of subgroups $S(L_1) \times \cdots \times S(L_k) \subseteq S(N)$, so $S(N)$ can be written as a disjoint union of cosets
$$
S(N) = \bigsqcup_{\tau \in S(N)/(S(L_1) \times \cdots \times S(L_k) ) } \tau ( S(L_1) \times \cdots \times S(L_k) ).
$$

\begin{figure}
\caption{This shows ASEP on $\mathfrak{Z}$, when $\mathfrak{Z} = \{1,2,5,6,7,8,9,11\}$.}
\begin{center}
\begin{tikzpicture}
\draw [thick](-0.5,0) -- (10.5,0) ;
\draw [black,fill=white] (0,0) circle (1ex);
\draw [black,fill=black] (1,0) circle (1ex);
\Large
\node at (2,0){$\times$};
\node at (3,0){$\times$};
\normalsize
\draw [black,fill=black] (4,0) circle (1ex);
\draw [black,fill=black] (5,0) circle (1ex);
\draw [black,fill=white] (6,0) circle (1ex);
\draw [black,fill=black] (7,0) circle (1ex);
\draw [black,fill=white] (8,0) circle (1ex);
\Large
\node at (9,0){$\times$};
\normalsize
\draw [black,fill=white] (10,0) circle (1ex);

\node at (5.8,0.25) (6-) {};
\node at (5.2,0.25) (5+) {};
\node at (5,0.25) (5) {};
\node at (5.8,0.25) (5-) {};
\node at (5.2,0.25) (4+) {};
\node at (6.8,0.25) (4-) {};
\node at (6.2,0.25) (3+) {};
\node at (7.8,0.25) (3-) {};
\node at (7.2,0.25) (2+) {};
\node at (0.8,0.25) (1-) {};
\node at (0.2,0.25) (0+) {};
\draw (1-) edge[out=90,in=90,->, line width=0.5pt] (0+);
\draw (4-) edge[out=90,in=90,->, line width=0.5pt] (3+);
\draw (2+) edge[out=90,in=90,->, line width=0.5pt] (3-);
\draw (4+) edge[out=90,in=90,->, line width=0.5pt] (5-);

\node at (0.5,0.8) {$q$};
\node at (6.5,0.7) {$q$};
\node at (7.5,0.8) {$1$};
\node at (5.5,0.8) {$1$};

\foreach \y in {1,2,3,4,5,6,7,8,9,10,11}{
     \node at (\y-1,-0.5) {\y};
  }

\end{tikzpicture}
\end{center}
\label{Blocks}
\end{figure}
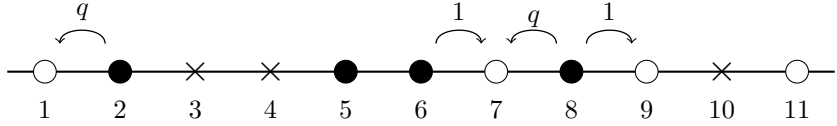

In this case, we have the following lemma:
\begin{lemma}\label{bbb}
Let $c(\cdot)$ be any probability measure on the set $S(N)/(S(L_1) \times \cdots \times S(L_k) )$. Then the probability measure on $S(N)$ defined by 
$$
\mathbb{P}( \tau(\sigma_1,\ldots,\sigma_k)) = c(\tau) \nu_q(\sigma_1) \cdots \nu_q(\sigma_k)
$$
is a stationary measure for ASEP on $\mathfrak{Z}$. In particular, the probability measure $\nu_q$ on $S(N)$ is a stationary measure. 
\end{lemma}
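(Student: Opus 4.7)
The plan is to exploit the fact that the structure of $\mathfrak{Z}$ together with full occupation forces the dynamics to decouple into $k$ independent copies of the ``purely algebraic'' multi-species ASEP on a single interval, for which $\nu_q$ has just been recalled to be stationary. I would not write any master equation from scratch; instead I would identify a conserved quantity and a conditional product structure.

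First I would observe that since every site of $\mathfrak{Z}$ is occupied and consecutive intervals are separated by at least one empty site of $\mathbb{Z}$, no particle can jump: a rightward or leftward move requires the target site to be in $\mathfrak{Z}\setminus X$, which never happens. Hence the position vector $\mathbf{x}$ is static, and the only active transitions are the species swaps $\sigma\mapsto \sigma\circ(r\ r+1)$ between particles occupying adjacent sites of $\mathbb{Z}$. Such adjacent pairs live inside a single interval, never straddling a gap.

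Next, writing $\sigma=\tau(\sigma_1,\ldots,\sigma_k)$ with $\tau$ the minimal-length left-coset representative in $D_{S(L_1)\times\cdots\times S(L_k)}$, I would note that the Young subgroup $S(L_1)\times\cdots\times S(L_k)$ is generated precisely by the adjacent transpositions that are internal to one of the $k$ intervals. Consequently every active swap multiplies the $(\sigma_1,\ldots,\sigma_k)$ part on the right by one such generator, while leaving $\tau$ (equivalently, the assignment of species to intervals) unchanged. So $\tau$ is a conserved quantity of the dynamics, and conditional on $\tau$ the $k$ intervals evolve as independent multi-species ASEPs, where the $i$-th process lives on a length-$L_i$ interval with $L_i$ distinct species.

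Finally, since the ASEP swap rates depend only on inversions of the species ordering, they are invariant under any order-preserving relabeling of the $L_i$ species occupying the $i$-th interval by $\{1,\ldots,L_i\}$. Thus the $i$-th conditional process is equivalent to mASEP on $\{1,\ldots,L_i\}$ with $\mathbf{N}=(1,\ldots,1)$, for which $\nu_q$ on $S(L_i)$ is stationary by the paragraph preceding the lemma. The product measure $\prod_i \nu_q(\sigma_i)$ is therefore stationary for the $\tau$-conditional dynamics, and since $\tau$ is conserved the measure $c(\tau)\prod_i\nu_q(\sigma_i)$ is stationary for any distribution $c$ on the coset space. Taking $k=1$ yields the ``in particular'' assertion. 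The main obstacle is really only the bookkeeping step of matching the invariance of $\tau$ with the generator's adjacent-swap rule; once one verifies that the generating adjacent transpositions of $S(L_1)\times\cdots\times S(L_k)$ are exactly the physically permissible swaps, the rest is a conditional product argument.
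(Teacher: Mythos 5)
Your argument for the first assertion is correct and is essentially the paper's argument, spelled out in more detail: the paper simply says that mASEP$[\mathfrak{Z}]$ is a product of independent mASEPs on the intervals and invokes the single-interval stationarity of $\nu_q$. Your observations that full occupation freezes $\mathbf{x}$, that the admissible swaps are exactly the generators of $S(L_1)\times\cdots\times S(L_k)$, and that $\tau$ is therefore conserved while the conditional dynamics factorizes, are all valid; the one point you gloss over is that the swap rates for the conditional process on the $i$-th interval are governed by $\inv(\sigma_i)$ rather than $\inv(\sigma)$, which requires the length additivity $\inv(\tau w)=\inv(\tau)+\inv(w)$ for $\tau\in D_{S(L_1)\times\cdots\times S(L_k)}$ and $w$ in the Young subgroup (Proposition \ref{2.3.3}(b)). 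That is a one-line fix, not a gap.

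The genuine gap is in your last sentence. The ``in particular'' clause asserts that $\nu_q$ on $S(N)$ is stationary for ASEP on the \emph{given} $\mathfrak{Z}$ with $k$ intervals; it is not the $k=1$ specialization (for $k=1$ the statement is just the single-interval case already recalled before the lemma, and nothing new is obtained). What is actually needed --- and what the paper proves --- is that $\nu_q$ on $S(N)$ is of the product form $c(\tau)\nu_q(\sigma_1)\cdots\nu_q(\sigma_k)$ for a suitable $c$, namely $c(\tau)\propto q^{\inv(\tau)}$ on the minimal coset representatives. This identification uses $\inv(\tau(\sigma_1,\ldots,\sigma_k))=\inv(\tau)+\inv(\sigma_1)+\cdots+\inv(\sigma_k)$ from Proposition \ref{2.3.3} together with the $q$-multinomial normalization $[N]_q^!=[L_1]_q^!\cdots[L_k]_q^!\sum_\tau q^{\inv(\tau)}$ from Proposition \ref{QBIN}. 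Without this step your proof does not deliver the conclusion that is actually used downstream (in the proposition following Lemma \ref{aaa}, one needs $\nu_q$ on $S(N)$ to be stationary for $S^{(\mathbf{x}\mathbf{x})}(t)$ for \emph{every} configuration $\mathbf{x}$, i.e.\ for every interval decomposition, not just $k=1$).
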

\begin{proof}
The first part of the lemma follows from the previous lemma.
because mASEP[$\mathfrak{Z}$] is independent copies of mASEP on each interval. 

For the second part of the lemma, we take 
$$
c(\tau) \propto q^{\inv(\tau)},
$$
where $\tau$ is the coset representative with the fewest inversions. By Lemma \ref{2.3.3}, this results in the probability measure $\nu_q$ on $S(N)$.

\end{proof}

Now we relate the multi--species ASEP to the framework of Proposition \ref{Proposition}. The role of $\mathcal{X}$ will be played by $\mathcal{W}_N^+$ and the role of $\mathcal{S}$ will be played by $D_{H'}^{-1}$, where $H'=S(\mathbf{N})$. Let $S^{(\mathbf{x}\mathbf{x}')}(t)$ be the Markov process on $D_{H'}^{-1}$.

\begin{lemma}\label{aaa}
Let $\mathbf{N}=(1,\ldots,1)$, and consider the Markov process $S^{(\mathbf{x}\mathbf{x})}(t)$ on $D_{H'}^{-1}=S(N)$. Then the Markov process $(\mathbf{x},S^{(\mathbf{x}\mathbf{x})}(t))$ is a mASEP on $\mathbf{x}$, with time rescaled by $-L_X(\mathbf{x},\mathbf{x})$.
\end{lemma}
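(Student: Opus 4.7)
The plan is to unpack $M_S^{(\mathbf{x}\mathbf{x})}$ using Proposition \ref{Proposition}(a), then verify that the resulting generator $\mathrm{Id}-M_S^{(\mathbf{x}\mathbf{x})}$ of $S^{(\mathbf{x}\mathbf{x})}(t)$ agrees with the generator of mASEP on $\mathbf{x}$ up to the overall factor $-L_X(\mathbf{x},\mathbf{x})$. The crucial structural point is that when $|\mathbf{x}|=N$ every site of the ambient lattice $\mathbf{x}$ is occupied, so mASEP on $\mathbf{x}$ admits no position moves; its generator consists entirely of the species--swap entries from the third and fourth cases of the mASEP generator.

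First I would observe that the only off--diagonal entries of $L_{\text{mASEP}[\mathfrak{Z}]}$ which preserve the first coordinate $\mathbf{x}$ are the species--swap transitions $\sigma\to\sigma\circ(r\ r{+}1)$. Hence for $\sigma'\neq\sigma$ a single such swap is the only $O(\epsilon)$ contribution to the numerator of
$$
\mathbb{P}\bigl(S(t+\epsilon)=\sigma'\,\big|\,X(t+\epsilon)=\mathbf{x},\,(X(t),S(t))=(\mathbf{x},\sigma)\bigr)
=\frac{L_{\text{mASEP}[\mathfrak{Z}]}((\mathbf{x},\sigma),(\mathbf{x},\sigma'))\,\epsilon+O(\epsilon^{2})}{1+L_X(\mathbf{x},\mathbf{x})\epsilon+O(\epsilon^{2})}.
$$
Proposition \ref{Proposition}(a) then gives $M_S^{(\mathbf{x}\mathbf{x})}(\sigma,\sigma')=L_{\text{mASEP}[\mathfrak{Z}]}((\mathbf{x},\sigma),(\mathbf{x},\sigma'))/L_X(\mathbf{x},\mathbf{x})$, so the off--diagonal $(\sigma,\sigma')$ entry of $\mathrm{Id}-M_S^{(\mathbf{x}\mathbf{x})}$ equals $L_{\text{mASEP}[\mathfrak{Z}]}((\mathbf{x},\sigma),(\mathbf{x},\sigma'))/(-L_X(\mathbf{x},\mathbf{x}))$, which is exactly $(-L_X(\mathbf{x},\mathbf{x}))^{-1}$ times the corresponding entry of the mASEP--on--$\mathbf{x}$ generator.

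For the diagonal I would use that $X(t)$, the projection of the joint process onto its first coordinate, is the single--species ASEP with generator $L_X$, so $-L_X(\mathbf{x},\mathbf{x})$ is the total rate of position moves out of $\mathbf{x}$. Decomposing $-L_{\text{mASEP}[\mathfrak{Z}]}((\mathbf{x},\sigma),(\mathbf{x},\sigma))$ as the position--move total rate $-L_X(\mathbf{x},\mathbf{x})$ plus the total species--swap rate $R(\sigma)$, and invoking the defining identity $L_{XS}((\mathbf{x},\sigma),(\mathbf{x},\sigma))=L_X(\mathbf{x},\mathbf{x})\,M_S^{(\mathbf{x}\mathbf{x})}(\sigma,\sigma)$, a short computation gives $1-M_S^{(\mathbf{x}\mathbf{x})}(\sigma,\sigma)=R(\sigma)/(-L_X(\mathbf{x},\mathbf{x}))$. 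This is again $(-L_X(\mathbf{x},\mathbf{x}))^{-1}$ times the diagonal of the mASEP--on--$\mathbf{x}$ generator, since the only transitions in mASEP on $\mathbf{x}$ are the species swaps that contribute to $R(\sigma)$.

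Combining the off--diagonal and diagonal computations, the generator of $S^{(\mathbf{x}\mathbf{x})}(t)$ equals $(-L_X(\mathbf{x},\mathbf{x}))^{-1}$ times the generator of mASEP on $\mathbf{x}$, which gives the asserted time rescaling. The argument is essentially a bookkeeping unpacking of definitions; the main point requiring care is the clean separation of the mASEP diagonal into its position--move part (which cancels $L_X(\mathbf{x},\mathbf{x})$ against the $\mathrm{Id}$ in $\mathrm{Id}-M_S^{(\mathbf{x}\mathbf{x})}$) and its species--swap part (which provides the diagonal of the rescaled generator).
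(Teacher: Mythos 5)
Your proposal is correct and is essentially the paper's proof spelled out: the paper simply asserts that the lemma "follows immediately from Proposition \ref{Proposition}(a) and the explicit expression for the generator of mASEP," and your off--diagonal/diagonal bookkeeping is exactly that unpacking. One trivial sign slip: since a generator has nonpositive diagonal, the intermediate identity should read $1-M_S^{(\mathbf{x}\mathbf{x})}(\sigma,\sigma)=R(\sigma)/L_X(\mathbf{x},\mathbf{x})=-R(\sigma)/(-L_X(\mathbf{x},\mathbf{x}))$, which is indeed $(-L_X(\mathbf{x},\mathbf{x}))^{-1}$ times the mASEP diagonal $-R(\sigma)$, so the conclusion is unaffected.
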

\begin{proof}
This follows immediately from Proposition \ref{Proposition}(a) and the explicit expression for the generator of mASEP.
\end{proof}

\begin{proposition}
Assume $\mathbf{N}=(1,\ldots,1)$. Suppose that the initial condition $Y$ is $q$--exchangeable. Then for $\sigma,\sigma^0$ in $S_N$, and for all $t \geq 0$,
$$
 q^{ - \inv(\sigma)}   \cdot \mathrm{Prob}((\mathbf{x},\sigma);t) =  q^{-\inv(\sigma^0)}  \cdot \mathrm{Prob}((\mathbf{x},\sigma^0) ;t).
$$
\end{proposition}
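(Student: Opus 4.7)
The strategy is to embed the multi--species ASEP into the framework of Proposition \ref{Proposition} with $\mathcal{X} = \mathcal{W}_N^+$ and $\mathcal{S} = D_{H'}^{-1} = S_N$ (the equality using that $H' = S(\mathbf{N}) = \{e\}$ when $\mathbf{N}=(1,\ldots,1)$), and to verify that $\nu_q(\sigma) = q^{\inv(\sigma)}/[N]_q^!$ is a stationary measure of \emph{every} auxiliary Markov process $S^{(\mathbf{x}\mathbf{x}')}(t)$. Once this is established, the conclusion follows immediately from part (d) of Proposition \ref{Proposition} applied to the $q$--exchangeable initial condition, which is precisely the product measure $\delta_{\mathbf{y}} \otimes \nu_q$.

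Verifying $\nu_q M_S^{(\mathbf{x}\mathbf{x}')} = \nu_q$ splits naturally into two cases. When $\mathbf{x}' \neq \mathbf{x}$, the only off--diagonal transitions of $L_{\text{mASEP}[\mathbb{Z}]}$ from $(\mathbf{x},\sigma)$ to $(\mathbf{x}',\cdot)$ are the location jumps $\mathbf{x}' = \mathbf{x}^{x\to x\pm 1}$, which leave $\sigma$ untouched. Thus $M_S^{(\mathbf{x}\mathbf{x}')} = \mathrm{Id}$ and any probability measure on $S_N$, in particular $\nu_q$, is trivially preserved. For the diagonal case $\mathbf{x}' = \mathbf{x}$, Lemma \ref{aaa} identifies $(\mathbf{x}, S^{(\mathbf{x}\mathbf{x})}(t))$ up to time rescaling with the mASEP on the finite site set $\mathbf{x}$. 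Since $\mathbf{x} \subset \mathbb{Z}$ decomposes into maximal runs of consecutive integers separated by unoccupied sites, this mASEP factors as a product of independent mASEPs over these runs. Lemma \ref{bbb}, applied with $c(\tau) \propto q^{\inv(\tau)}$, exhibits $\nu_q$ on $S_N$ as the resulting product stationary measure.

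Having established stationarity of $\nu_q$ for every pair $(\mathbf{x},\mathbf{x}')$, Proposition \ref{Proposition}(d) applied with initial location $\mathbf{y}$ yields, for every target $(\mathbf{x},\sigma)$,
\begin{equation*}
\sum_{\sigma_0 \in S_N} \nu_q(\sigma_0) \, \mathbb{P}\bigl((\mathbf{x}(t),\sigma(t)) = (\mathbf{x},\sigma) \bigm| (\mathbf{x}(0),\sigma(0)) = (\mathbf{y},\sigma_0)\bigr) = \nu_q(\sigma) \, \mathbb{P}(\mathbf{x}(t) = \mathbf{x} \mid \mathbf{x}(0) = \mathbf{y}).
\end{equation*}
Because the $q$--exchangeable initial condition supported at $\mathbf{y}$ assigns probability $\nu_q(\sigma_0)$ to $(\mathbf{y},\sigma_0)$, the left--hand side is exactly $\mathrm{Prob}((\mathbf{x},\sigma);t)$. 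Hence $\mathrm{Prob}((\mathbf{x},\sigma);t) = \nu_q(\sigma) \cdot \mathbb{P}(\mathbf{x}(t)=\mathbf{x}\mid \mathbf{x}(0)=\mathbf{y})$, which makes $q^{-\inv(\sigma)}\mathrm{Prob}((\mathbf{x},\sigma);t)$ visibly independent of $\sigma$, as required.

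The main point to watch is the diagonal case: one must check that the ``swap'' transitions of $L_{\text{mASEP}[\mathbb{Z}]}$ fixing $\mathbf{x}$ are exactly the exchanges of species at sites of $\mathbb{Z}$ that are adjacent, so that the dynamics on $\mathbf{x}$ genuinely decouples across the maximal consecutive runs and Lemma \ref{bbb} applies cluster--by--cluster. This is built into the definition but is the only nontrivial combinatorial ingredient; everything else is a direct application of the generator calculus packaged in Proposition \ref{Proposition}.
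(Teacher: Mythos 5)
Your proof is correct and follows essentially the same route as the paper: both reduce the claim to the stationarity of $\nu_q$ for the auxiliary processes $S^{(\mathbf{x}\mathbf{x}')}(t)$ via Lemmas \ref{aaa} and \ref{bbb} and then invoke the generator calculus of Proposition \ref{Proposition}. The only cosmetic difference is that you apply part (d) to get $\mathrm{Prob}((\mathbf{x},\sigma);t)=\nu_q(\sigma)\,\mathbb{P}(\mathbf{x}(t)=\mathbf{x})$ directly, whereas the paper cites part (c) to say the law stays of product form; since (d) is a corollary of (c), these are the same argument, and your explicit treatment of the off--diagonal case $\mathbf{x}'\neq\mathbf{x}$ is a welcome detail the paper leaves implicit.
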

\begin{proof}
The theorem follows immediately from Proposition \ref{Proposition}(c), Lemma \ref{bbb} and Lemma \ref{aaa}. Namely, the two lemmas tell us that the $q$--exchangeability of the initial conditions imply that the initial condition is of the form $(\mu \otimes \nu)$. Proposition \ref{Proposition}(c) then imply that the probability distribution is still of the same form at all times, which means that the distribution is still $q$--exchangeable. This is exactly the statement of the theorem.
\end{proof}

\subsubsection{The general case}

By Proposition \ref{Proposition}(d) and the result for the single--species ASEP, when $\mathbf{N}=(1,\ldots,1)$ the probability distribution is given by
$$
\mathrm{Prob}((\mathbf{x},\sigma^0);t) = \left( \frac{1}{2\pi i}\right)^N \frac{q^{\inv(\sigma^0)}}{[N]_q^!} \sum_{\tau \in S_N} \int_{\mathcal{C}_r} \cdots \int_{\mathcal{C}_r} A_{\tau} \prod_{i} \xi_{\tau(i)}^{x_i - y_{\tau(i)}-1} e^{(1+q)\sum_i \epsilon(\xi_i)t} d\xi_1 \cdots d\xi_N,
$$
where $\sigma^0 \in S(N)$. The more general case when $\mathbf{N}$ is arbitrary now follows from the Markov projection property and Proposition \ref{QBIN}: for $\sigma \in D_{H'}^{-1}$,

\begin{align*}
\mathrm{Prob}((\mathbf{x},\sigma);t) &= \sum_{\substack{\sigma^0 \in S(N) \\\sigma^0=a\sigma, a\in H'}} \mathrm{Prob}((\mathbf{x},\sigma^0);t) \\
&= \sum_{a \in H'} \left( \frac{1}{2\pi i}\right)^N \frac{q^{\inv(a)+\inv(\sigma)}}{[N]_q^!} \sum_{\tau \in S_N} \int_{\mathcal{C}_r} \cdots \int_{\mathcal{C}_r} A_{\tau} \prod_{i} \xi_{\tau(i)}^{x_i - y_{\tau(i)}-1} e^{(1+q)\sum_i \epsilon(\xi_i)t} d\xi_1 \cdots d\xi_N,
\end{align*}
yielding the theorem.

\bibliographystyle{plain}

\begin{figure}
\caption{The calculations used in Example 2.}
\begin{center}
\includegraphics{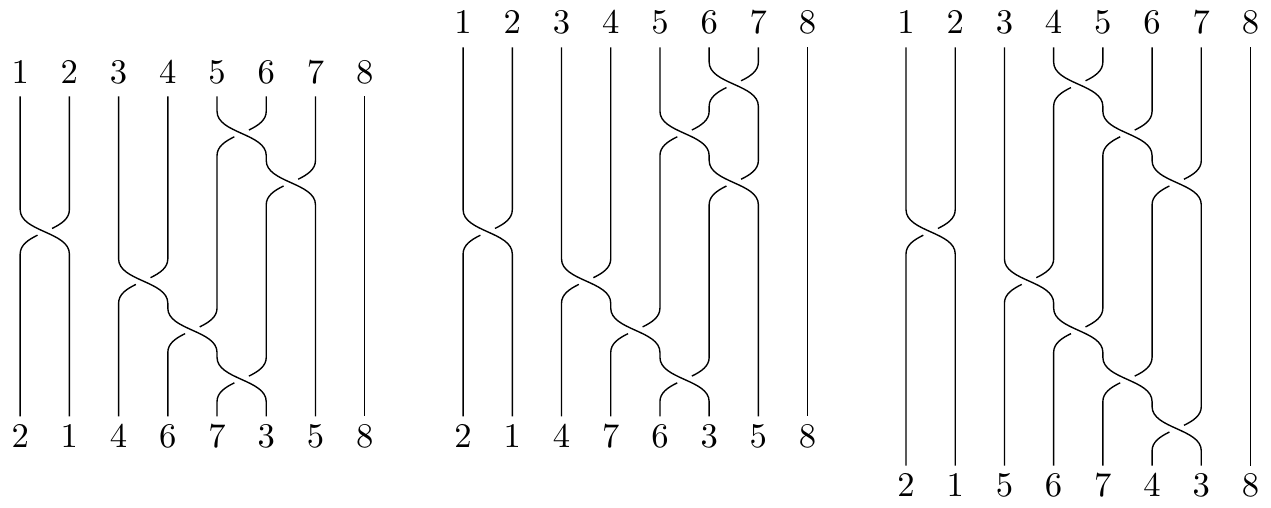}

\includegraphics{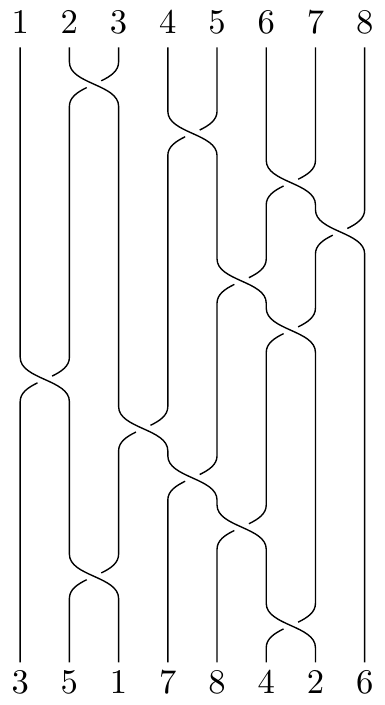}

\end{center}
\end{figure}

\end{document}